\let\csname equation*\endcsname=\relax
\let\csname endequation*\endcsname=\relax
\newtheorem{theorem}{Theorem}[section]
\newtheorem{proposition}[theorem]{Proposition}
\newtheorem{definition}[theorem]{Definition} 
\newtheorem{remark}[theorem]{Remark}
\newtheorem{lemma}[theorem]{Lemma} 
\newtheorem{corollary}[theorem]{Corollary}
\newcommand{\var}{\operatorname{var}}
\newcommand{\contrho}{\tilde{\rho}}
\newcommand{\conttau}{\tilde{\tau}}
\newcommand{\cov}{\operatorname{cov}}
\newcommand{\floor}[1]{\left\lfloor #1 \right\rfloor}
\newcommand{\ceil}[1]{\left\lceil #1 \right\rceil}
\newcommand{\aL}{a}
\newcommand{\bL}{b}
\newcommand{\sA}{\mathcal{A}}
\newcommand{\sG}{\mathcal{G}}
\newcommand{\sN}{\mathcal{N}}
\newcommand{\sP}{\mathcal{P}}
\newcommand{\sQ}{\mathcal{Q}}
\newcommand{\sR}{\mathcal{R}} 
\newcommand{\sS}{\mathcal{S}}
\newcommand{\sT}{\mathcal{T}} 
\newcommand{\sW}{\mathcal{W}}
\newcommand{\RR}{\mathbb{R}}
\newcommand{\ZZ}{\mathbb{Z}}
\newcommand{\NN}{\mathbb{N}} 
\newcommand{\TT}{\mathbb{T}} 
\newcommand{\PP}{\mathbb{P}} 
\newcommand{\EE}{\mathbb{E}}
\newcommand{\torus}{\TT^d_L}
\newcommand{\subbox}{\mathbb{B}^d}
\newcommand{\treespec}{g_{p,n}}
\newcommand{\naturals}{\NN}
\newcommand{\posint}{\ZZ_+}
\renewcommand{\d}{\mathrm{d}}
\newcommand{\pc}{p_{\mathrm{c}}}
\newcommand{\indicator}{\mathbf{1}}
\newcommand{\<}{\langle}
\renewcommand{\>}{\rangle}
\newcommand{\bits}{B}
\newcommand{\cube}{S^\bits}
\newcommand{\queries}{\sQ_\bits}
\begin{document}
\title{Critical speeding-up in dynamical percolation}
\author{Eren Metin El\c{c}i, Timothy M. Garoni}
\address{School of Mathematics, Monash University, Clayton, Victoria 3800, Australia}
\ead{\mailto{tim.garoni@monash.edu}}

\date{\today}

\maketitle

\maketitle
\begin{abstract}
    We study the autocorrelation time of the size of the cluster at the origin in discrete-time dynamical percolation. We focus on binary
    trees and high-dimensional tori, and show in both cases that this autocorrelation time is linear in the volume in the
    subcritical regime, but strictly sublinear in the volume at criticality. This establishes rigorously that the cluster size at the origin
    in these models exhibits critical speeding-up. The proofs involve controlling relevant Fourier coefficients. In the case of binary
    trees, these Fourier coefficients are studied explicitly, while for high-dimensional tori we employ a randomised algorithm argument
    introduced by Schramm and Steif in the context of noise sensitivity.
\end{abstract}

\noindent{\it Keywords}: Critical speeding-up, percolation, noise sensitivity

\section{Introduction}
Just as models in statistical mechanics typically exhibit divergent spatial scales near a critical point, stochastic dynamics associated
with such models also tend to display divergent time scales~\cite{HohenbergHalperin1977}. For example, Markov chains employed in Monte Carlo
algorithms at critical points typically exhibit relaxation times and stationary autocorrelation times which scale superlinearly with
volume~\cite{Sokal1997}; this well-known phenomenon is referred to as \emph{critical slowing-down}. By contrast, a less familiar possibility
is that time scales associated with certain observables scale sublinearly with volume at criticality. This phenomenon of \emph{critical
  speeding-up} has been observed experimentally (see
e.g.~\cite{BoukariBriggsShaumeyerGammon1990,GramsValldorGarstHemberger2014,TavoraRoschMitra2014,PatzLiLuoYangBudkoCanfieldPerakisWang2017}),
and also in Monte Carlo simulations~\cite{DengGaroniSokal2007,DengGaroniSokal2007_worm,ElciWeigel2013}.
 
In particular, in~\cite{DengGaroniSokal2007} the heat-bath dynamics of the critical Fortuin-Kasteleyn random-cluster model was simulated, on
2-dimensional and 3-dimensional tori, at a variety of cluster fugacities. The autocorrelations of certain observables were then studied. The
sum of squared cluster sizes, which is a natural estimator for the susceptibility, was studied in detail. It was observed generically that
its stationary autocorrelation function decays on a time scale strictly sublinear in the volume. Moreover, for a range of cluster
fugacities, sublinear scaling was also observed for its autocorrelation time, which quantifies the time scale between ``effectively
independent" samples. In particular, these behaviours were observed for the case of unit cluster fugacity, for which the stationary
distribution reduces to that of independent bond percolation. We will refer to this special case of the random-cluster model heat-bath
dynamics as discrete-time dynamical percolation; see section~\ref{subsec:dynamical percolation and autocorrelations} for a precise
definition.

Independently, closely-related questions were being addressed in the context of noise sensitivity of Boolean functions defined on the
discrete hypercube; see~\cite{GarbanSteif2015} for an overview.  The notion of noise sensitivity was introduced
in~\cite{BenjaminiKalaiSchramm1999}, and in essence it amounts to the fast decay of autocorrelation functions in dynamical percolation; for
precise definitions see section~\ref{sec:spectral representations}.  In particular, it was shown in~\cite{BenjaminiKalaiSchramm1999} that
the indicator for a left-right crossing of a rectangle in critical bond percolation on $\ZZ^2$ is noise sensitive. A quantitative version of
this result, bounding the rate of decay of these autocorrelations, was then presented in~\cite{SchrammSteif2010}. A key tool employed in
these results was Fourier analysis on the hypercube.

Our main aim in the current work is to study critical speeding-up using the same framework introduced for noise sensitivity. Whilst the
existing noise sensitivity literature focuses largely on Boolean functions, our focus will instead be on non-Boolean functions.
Specifically, we consider the size of the cluster containing the origin/root in dynamical percolation on binary trees and high-dimensional
tori. We show in both cases that this autocorrelation time is linear in the volume in the subcritical regime, but strictly sublinear in the
volume at criticality. In the case of high-dimensional tori, a key tool in our proof is a theorem due to Schramm and
Steif~\cite{SchrammSteif2010} which bounds Fourier coefficients by considering randomised algorithms.

\subsection{Outline}
The remainder of the article is organised as follows. Section~\ref{subsec:notation} lists some notational conventions.  In
section~\ref{subsec:dynamical percolation and autocorrelations}, we explicitly define dynamical percolation, in both discrete and continuous
time, and also recall standard definitions of the autocorrelation function and autocorrelation time. Statements of our main results are then
presented in section~\ref{subsec:Results}. Section~\ref{subsec:spectral form of autocorrelations} provides explicit spectral representations
of the autocorrelation functions and autocorrelation times, in both discrete-time and continuous time. Section~\ref{subsec:revealment
  predictability theorem} then discusses a slight variant of the revealment theorem of~\cite{SchrammSteif2010}. Our main results are then
proved in sections~\ref{sec:binary trees} and~\ref{sec:high dimensions}, which cover the case of binary trees and high-dimensional tori,
respectively. Finally, section~\ref{sec:revealment predictability proof} presents a self-contained proof of the theorem discussed in
section~\ref{subsec:revealment predictability theorem}.

\subsection{Notation}
\label{subsec:notation}
The set of non-negative integers will be denoted by $\naturals$, and $\posint:=\naturals\setminus0$. For any $n\in\posint$ we write
$[n]:=\{1,2,\ldots,n\}$. For a finite set $A$ and $k\in\naturals$ we denote by $\binom{A}{k}$ the set of all subsets of $A$ of cardinality
$k$, and we let $[A]:=[|A|]$. For vertices $x$ and $y$ in a graph, we will write $x \leftrightarrow y$ to denote that $x$ and $y$ belong to
the same connected component.

In addition to the standard asymptotic symbols $O$ and $o$ we find it convenient to write $f=\Omega(g)$ to denote $g=O(f)$ and $f=\omega(g)$
to denote $g=o(f)$. If both $f=O(g)$ and $f=\Omega(g)$ we will write $f=\Theta(g)$. We will also use the Vinogradov symbols, so that $f\ll
g$ is equivalent to $f=O(g)$, and $f\gg g$ is equivalent to $g=O(f)$.

\section{Definitions and results}
\label{sec:definitions and results}
\subsection{Dynamical percolation and autocorrelations}
\label{subsec:dynamical percolation and autocorrelations}
Let $S:=\{-1,1\}$, and let $\bits$ be any finite set. For a given $p\in(0,1)$, let
\begin{equation}
\label{eq:Bernoulli distribution}
    \pi_p(s):=p\,\indicator(s=+1)+(1-p)\,\indicator(s=-1), \qquad s \in S\;,
\end{equation}
denote the Bernoulli distribution on $S$, and let $\pi_{p,B}$ be the corresponding product measure on $S^B$, so that
\begin{equation}
\pi_{p,\bits}(x):=\prod_{i\in\bits}\pi_p(x_i)\;,\qquad \text{ for all } x\in S^B\;.
\end{equation}
We refer to the elements of $B$ as bits, and with respect to $x\in S^B$ we say $i\in B$ is open if $x_i=+1$ and closed if $x_i=-1$.

Now consider the stochastic matrix on state space $\cube$ with entries
\begin{equation}
P_{p,\bits}(x,y):= \frac{1}{|\bits|} \sum_{i\in\bits} \pi_p(y_i)\,\prod_{j\in  B\setminus i}\indicator(y_j=x_j)\;.
\end{equation}
This is the special case of the discrete-time heat-bath chain for the Fortuin-Kasteleyn model (see e.g.~\cite{Grimmett2009}) on a graph with
edge set $\bits$, when the cluster fugacity is $q=1$. In the special case $p=1/2$, it reduces to lazy simple random walk on the hypercube
$S^\bits$. The latter also corresponds to the infinite temperature limit of the discrete-time single-spin heat-bath process for the Ising
model, on a graph with vertex set $\bits$. For a given choice of $\bits$ and $p$, let $Z^{p,\bits}=(Z_s^{p,\bits})_{s\in\naturals}$ be a
stationary Markov chain with transition matrix $P_{p,\bits}$. We refer to the process $Z^{p,\bits}$ as discrete-time dynamical percolation
on $\bits$ with parameter $p$. We emphasise that for each $s\in\NN$, the state $Z^{p,\bits}_s$ has distribution $\pi_{p,\bits}$.

Discrete-time dynamical percolation has an intimate relationship to the dynamical percolation model first studied
in~\cite{HaggstrommPeresSteif1997}. The latter corresponds to a countably infinite family of independent continuous-time Markov chains on
$S$, each with generator $\Pi_p-I$, where $\Pi_p$ is the matrix on $S$ with entries $\Pi_p(s,s')=\pi_p(s')$. It is easily seen that the
subset of chains corresponding to a particular set of bits $\bits$ is a continuous time Markov chain on $\cube$ with generator
$Q_{p,\bits}=|\bits|(P_{p,\bits}-I)$; in particular, $Q_{p,\bits}$ is the rate-$|\bits|$ continuization (see e.g.~\cite[section
  20.1]{LevinPeres2017}) of $P_{p,\bits}$.

We are interested in the autocorrelations of the real-valued processes $f(Z_s^{p,\bits})$ induced by functions $f:\cube\to\RR$. The
corresponding \emph{autocorrelation function} is
\begin{equation}
\rho_{p,f}(s):=\frac{\cov(f(Z_0^{p,\bits}),f(Z_s^{p,\bits}))}{\var(f(Z_0^{p,\bits}))}, \qquad s\in\naturals\;,
\label{eq:discrete autocorrelation function}
\end{equation}
and \emph{autocorrelation time}\footnote{This is often referred to as the \emph{integrated} autocorrelation time~\cite{Sokal1997}.} is
\begin{equation}
\tau_{p,f}:=\frac{1}{2}\sum_{s\in\ZZ}\rho_{p,f}(|s|)\;.
\label{eq:discrete autocorrelation time}
\end{equation}
Analogous quantities can also be defined for continuous-time dynamical percolation. If $Y^{p,\bits}=(Y_t^{p,\bits})_{\ge0}$ is a stationary
continuous-time Markov chain with generator $Q_{p,\bits}$, then we set
\begin{equation}
\contrho_{p,f}(t):=\frac{\cov(f(Y_0^{p,\bits}),f(Y_t^{p,\bits}))}{\var(f(Y_0^{p,\bits}))}, \qquad t\in[0,\infty)\;,
\end{equation}
and
\begin{equation}
\conttau_{p,f}:=\int_0^{\infty}\contrho_{p,f_\bits}(t)\,\d t\;.
\end{equation}

One concrete motivation for interest in $\tau_{p,f}$ is that (see e.g.~\cite[equation~(9.2.11)]{MadrasSlade1996})
\begin{equation}
\var\left(\frac{1}{T}\sum_{s=0}^{T-1}f(Z_s^{p,\bits})\right) \sim \frac{2\tau_{p,f}}{T} \var(f(Z_0^{p,\bits})), \qquad T\to\infty\;.
\end{equation}
Therefore, if one wishes to estimate $\EE(f(Z_0^{p,\bits}))$ by simulating $Z^{p,\bits}$, the squared error is proportional to
$\tau_{p,f}$. By contrast, the variance of the sample mean of $m$ independent realisations of $f(Z_0^{p,\bits})$ is simply
$\var(f(Z_0^{p,\bits}))/m$. Since each such independent realisation consists of $|\bits|$ bit updates, in order to meaningfully compare
estimates from independent samples with those constructed using the Markov chain $Z^{p,\bits}$, one should set $T=m|\bits|$.
 
Now fix $p$, and consider an increasing sequence of bit sets, $\bits_n$, and a sequence of functions $f_n:S^{\bits_n}\to\RR$. It follows
that Markov-chain sampling asymptotically outperforms independent sampling in estimation of $\EE(f_n(Z_0^{p,\bits_n}))$ iff $\tau_{p,f_n}=
o(|\bits_n|)$ as $n\to\infty$. When a sequence of functions displays such behaviour, for a given value of $p$, we say it exhibits
\emph{speeding up}. In section~\ref{subsec:Results} we discuss the autocorrelation time for the cluster size at the origin in discrete-time
dynamical percolation on trees and high-dimensional tori. We find that while speeding up does not occur in the subcritical regime, it does occur at
criticality. This phenomenon is precisely what we shall mean by \emph{critical speeding-up}.

The autocorrelations of discrete-time and continuous-time dynamical percolation are closely related. In fact, it follows immediately
from lemma~\ref{lem:spectral autocorrelation forms} below that
\begin{equation}
    \tau_{p,f}=|\bits|\,\conttau_{p,f}-1/2\;.
 \label{eq:discrete vs continuous autocorrelation time}
\end{equation}
From equation~\eqref{eq:discrete vs continuous autocorrelation time}, it follows that a sequence of functions $f_n$ displays speeding up, at
parameter $p$, iff the corresponding sequence $\conttau_{p,f_n}=o(1)$ as $n\to\infty$.  Moreover, the following result, whose proof is
deferred to section~\ref{sec:spectral representations}, implies that the continuous-time and discrete-time autocorrelation functions have
closely related asymptotics, on non-trivial timescales.
\begin{lemma}
\label{lem:discrete time vs continuous time rho asymptotics}
Fix $p\in(0,1)$. Let $\bits_n$ be an increasing sequence of finite sets, and let $f_n:S^{\bits_n}\to\RR$ be a sequence of non-constant
functions. Then as $n\to\infty$, for any positive integer sequence $s_n$ satisfying $s_n=\omega(1)$ we have
$$
\rho_{p,f_n}(s_n) = \contrho_{p,f_n}(s_n/|\bits_n|) +o(1)\;.
$$
\end{lemma}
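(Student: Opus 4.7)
The plan is to diagonalise both autocorrelations in the Fourier basis on $S^{B_n}$ and then compare them term by term. By lemma~\ref{lem:spectral autocorrelation forms}, for each $\emptyset\ne T\subseteq B_n$ the corresponding Fourier character is a common eigenfunction of $P_{p,B_n}$ and $Q_{p,B_n}$ with eigenvalues $1-|T|/|B_n|$ and $-|T|$ respectively, so that the two autocorrelations admit the spectral representations
\begin{equation*}
\rho_{p,f_n}(s) \;=\; \sum_{\emptyset\ne T\subseteq B_n} a_T\,\bigl(1-|T|/|B_n|\bigr)^{s}, \qquad
\contrho_{p,f_n}(t) \;=\; \sum_{\emptyset\ne T\subseteq B_n} a_T\,e^{-|T|t},
\end{equation*}
where the normalised Fourier weights $a_T:=\hat{f}_n(T)^2/\var(f_n(Z_0^{p,B_n}))$ are nonnegative and sum to one (the latter using the assumed nonconstancy of $f_n$). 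Setting $t=s_n/|B_n|$ and subtracting yields the identity
\begin{equation*}
\contrho_{p,f_n}(s_n/|B_n|)-\rho_{p,f_n}(s_n)
\;=\; \sum_T a_T\,e^{-|T|s_n/|B_n|}\,\bigl(1-e^{-s_n h(|T|/|B_n|)}\bigr),
\end{equation*}
where $h(u):=-\log(1-u)-u\ge 0$.

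Since $(a_T)$ is a probability distribution on the nonempty subsets, it then suffices to show that $D(u,s):=e^{-us}\bigl(1-e^{-sh(u)}\bigr)$ tends to zero uniformly in $u\in[0,1]$ as $s=s_n\to\infty$. My plan is an elementary calculus estimate. The derivative bound $h'(u)=u/(1-u)\le 2u$ on $[0,1/2]$ gives $h(u)\le u^2$ there, so together with $1-e^{-x}\le x$ one obtains $D(u,s)\le su^2 e^{-us}$, whose supremum over $u\ge 0$ is $4 e^{-2}/s$, attained at $u=2/s$. On the complementary interval $u\in[1/2,1]$, the factor $e^{-us}\le e^{-s/2}$ already renders $D(u,s)$ exponentially small. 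Hence $\sup_u D(u,s_n)=O(1/s_n)$, and since $s_n=\omega(1)$ by hypothesis this tends to zero, yielding the claim.

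The only nontrivial step is the uniform bound on $D$. Qualitatively it encodes the fact that the discrepancy $(1-u)^s-e^{-us}$ achieves its worst-case value of order $1/s$ at the scale $u\sim 1/s$, and is otherwise driven to zero either by the smallness of $u$ (both terms near one) or by the exponential factor $e^{-us}$ (both terms already small). The same bound can be read off probabilistically via the Poissonisation identity $e^{sQ_{p,B}/|B|}=\EE[P_{p,B}^{N}]$ with $N\sim\mathrm{Poisson}(s)$, which expresses $\contrho_{p,f_n}(s_n/|B_n|)$ as a Poisson average of $\rho_{p,f_n}$, but the spectral route above bypasses any explicit coupling.
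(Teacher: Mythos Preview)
Your proof is correct and rests on the same spectral representation and the same identity $(1-u)^s=e^{-us}\,e^{-s\,h(u)}$ used in the paper. The difference lies only in how the error is controlled: the paper first observes $\rho\le\contrho$ trivially and then obtains the reverse inequality (up to $o(1)$) by truncating the spectral sum at a threshold $a_n=\lfloor |B_n|c_n/s_n\rfloor$ with $c_n=\omega(1)$, $c_n=o(\sqrt{s_n})$, and using monotonicity of (their version of) $h$; you instead bound the full difference in one step via the uniform estimate $\sup_{u\in[0,1]} e^{-us}\bigl(1-e^{-sh(u)}\bigr)=O(1/s)$. Your route is slightly more direct and yields the quantitative rate $\contrho_{p,f_n}(s_n/|B_n|)-\rho_{p,f_n}(s_n)=O(1/s_n)$, whereas the paper's truncation argument gives only the qualitative $o(1)$; on the other hand the paper's splitting makes the two directions of the inequality visibly separate. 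Either way the substance is the same.
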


There is a direct relationship between the decay of the continuous-time autocorrelation function, and the notion of noise sensitivity.  Fix
$p\in(0,1)$, and let $X^{p,\bits}$ be a random element of $\cube$ with distribution $\pi_{p,\bits}$.  For given $\epsilon>0$, let
$X^{p,\bits}_{\epsilon}$ denote the following perturbation of $X^{p,\bits}$: independently for each bit $i$, with probability $1-\epsilon$
the $i$th bit of $X^{p,\bits}_{\epsilon}$ agrees with the $i$th bit of $X^{p,\bits}$, and with probability $\epsilon$ it is set to a new
random value chosen according to $\pi_p$, independent of its previous value, and independent of the values of all other bits.  Now consider,
again, an increasing sequence of finite sets $\bits_n$. A sequence of Boolean functions $f_n:S^{\bits_n}\to S$ is called \emph{noise
  sensitive}~\cite{GarbanSteif2015} if the covariance of $f_n(X^{p,\bits_n})$ and $f_n(X^{p,\bits_n}_{\epsilon})$ goes to zero as
$n\to\infty$ for all $\epsilon>0$. However, it is straightforward to verify directly that if $\epsilon=1-\e^{-t}$ then
$(X^{p,\bits},X^{p,\bits}_\epsilon)$ and $(Y^{p,\bits}_0,Y^{p,\bits}_t)$ have the same joint distribution. Therefore, the covariance of
$f_n(X^{p,\bits_n})$ and $f_n(X^{p,\bits_n}_{\epsilon})$ is precisely the covariance of $f_n(Y_0^{p,B})$ and $f_n(Y_t^{p,B})$, and the
correlation of $f_n(X^{p,\bits_n})$ and $f_n(X^{p,\bits_n}_{\epsilon})$ is precisely $\contrho_{p,f_n}(t)$. Provided that
$\var(f_n(X^{p,B_n}))$ is bounded away from zero, it then follows that $f_n$ is noise sensitive iff $\contrho_{p,f_n}(t)\to 0$ as
$n\to\infty$ for all fixed $t>0$.  We remark that although noise sensitivity is typically only considered in the specific case $p=1/2$, the
above discussion makes sense for any fixed $p\in(0,1)$. 

In the case of non-Boolean functions, where the variance is no longer bounded from above by 1, it is arguably more natural to consider the
correlation from the outset, rather than the covariance. In addition to the question of speeding up, we will be interested in the question of
sublinear autocorrelation decay, in the sense that there exists $c_n=o(|B_n|)$ such that $\rho_{p,f_n}(s_n)=o(1)$ for all $s_n=\omega(c_n)$.
We emphasise that speeding up and autocorrelation decay can occur at strictly different time scales. A simple example illustrating this is
presented below in equation~\eqref{eq:toy example f}.

\subsection{Results}
\label{subsec:Results}
Our main results concern autocorrelations of the size of the cluster containing a fixed vertex, in dynamical percolation on certain trees
and high-dimensional tori. The set of bits, $B$, then corresponds to the edge set of the tree or torus under consideration, and for $x\in
S^B$ drawn from $\pi_{p,B}$, we consider the connected components of the spanning subgraph defined by the open edges.  The size of a such a
connected component, or cluster, is then simply the number of vertices which it contains.

Our result for trees is as follows. By a \emph{binary tree}, we simply mean a rooted tree in which each vertex has at most 2 children.  We
say a binary tree is \emph{perfect} if each non-leaf vertex has precisely two children, and all leaves have the same depth. Recall (see
e.g.~\cite[theorem 1.8]{LyonsPeres2016}) that the critical threshold for bond percolation on the infinite-depth perfect binary tree is
$\pc=1/2$.
\begin{proposition}
  \label{prop:trees}
  For $n\in\posint$, let $(V_n,B_n)$ denote the perfect binary tree of depth $n$. For $p\in(0,1)$, consider discrete-time dynamical
  percolation on $(V_n,B_n)$, and let $f_n$ denote the size of the cluster containing the root.
 \begin{enumerate}[label=(\roman*)]
 \item\label{prop_part:tree off-critical rho} Let $s_n$ be a positive integer sequence. If $p\in(0,1)$ and $p\neq 1/2$ then
   $$
     \lim_{n\to\infty}\rho_{p,f_n}(s_n) = 
     \begin{cases}
     1, & \text{ if } s_n=o(|B_n|), \\
     0, & \text{ if } s_n=\omega(|B_n|). \\
     \end{cases}
     $$
 \item\label{prop_part:tree critical rho} Let $s_n$ be a positive integer sequence. Then
   $$
     \lim_{n\to\infty}\rho_{1/2,f_n}(s_n) = 
     \begin{cases}
     1, & \text{ if } s_n=o(|B_n|/n), \\
     0, & \text{ if } s_n=\omega(|B_n|/n). \\
     \end{cases}
     $$
 \item\label{prop_part:tree tau} As $n\to \infty$
   \begin{equation*}
     \dfrac{\tau_{p,f_n}}{|B_n|} \sim
      \begin{cases}
        \dfrac{1-2p}{2p(1-p)}\log\left(\dfrac{1-2p^2}{1-2p}\right), & p\in(0,1/2),\vspace{3mm}\\
        6\dfrac{\log(n)}{n}, & p=1/2,\vspace{3mm}\\
        \dfrac{2p-1}{(1-p)}\log\left(\dfrac{p}{2p-1}\right), & p\in(1/2,1).
      \end{cases}
     \end{equation*}
 \end{enumerate}
\end{proposition}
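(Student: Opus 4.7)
The plan is to apply the spectral representation of lemma~\ref{lem:spectral autocorrelation forms}. Setting $Y_e(x) := (x_e-(2p-1))/(2\sqrt{p(1-p)})$, the products $\chi_T := \prod_{e\in T}Y_e$ form an orthonormal Fourier basis of $L^2(\pi_{p,B_n})$ that diagonalises $P_{p,B_n}$ with eigenvalues $\lambda_T = 1-|T|/|B_n|$, whence
\begin{equation*}
\rho_{p,f}(s)=\frac{1}{\var(f)}\sum_{T\neq\emptyset}\hat f(T)^{2}\lambda_T^{s},\qquad \tau_{p,f}=\frac{|B_n|}{\var(f)}\sum_{T\neq\emptyset}\frac{\hat f(T)^{2}}{|T|}-\frac12.
\end{equation*}
Using $\indicator(e\text{ open})=p+\sqrt{p(1-p)}\,Y_e$ and $f_n = \sum_v\prod_{e\in\text{path}(o,v)}\indicator(e\text{ open})$, expansion shows that $\hat f_n(T)=0$ unless all edges of $T$ lie on a single descending path from $o$; otherwise, with $L=L(T)$ the depth of the deepest endpoint of $T$,
\begin{equation*}
\hat f_n(T)=\bigl((1-p)/p\bigr)^{|T|/2}\,p^{L}A_{L},\qquad A_L:=\sum_{j=0}^{n-L}(2p)^{j}.
\end{equation*}

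Admissible $T$ are then parameterised by a pair $(w,T_0)$, where $w$ is the deepest endpoint of $T$ (one of $2^L$ vertices at depth $L$) and $T_0\subseteq\{e_1,\ldots,e_{L-1}\}$ is the portion of $T$ strictly above $w$. The sum over $T_0$ is carried out in closed form via the binomial theorem and the identity $\binom{L-1}{k-1}/k=\binom{L}{k}/L$, producing
\begin{equation*}
\Sigma_L:=\!\!\!\sum_{T:L(T)=L}\!\!\!\hat f_n(T)^{2}=(1-p)(2p)^{L}A_{L}^{2},\quad \sum_{T:L(T)=L}\frac{\hat f_n(T)^{2}}{|T|}=\frac{(2p)^{L}(1-p^{L})A_{L}^{2}}{L}.
\end{equation*}
Moreover, under the probability measure $\mu_n(T)\propto\hat f_n(T)^2$, the conditional law of $|T|-1$ given $L(T)=L$ is $\mathrm{Binomial}(L-1,1-p)$.

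For parts \ref{prop_part:tree off-critical rho} and \ref{prop_part:tree critical rho} one analyses the $\mu_n$-mass on $\{|T|\leq K\}$. When $p\neq 1/2$, $\Sigma_L$ decays geometrically in $L$ (from $(2p)^L$ when $p<1/2$, and from $(2p)^{-L}$ inside $A_L^2\sim(2p)^{2(n-L+1)}/(2p-1)^2$ when $p>1/2$), so $|T|$ is tight under $\mu_n$; hence $\rho_{p,f_n}(s_n)\geq\mu_n(|T|\leq K)(1-K/|B_n|)^{s_n}\to 1$ when $s_n=o(|B_n|)$, while $\rho_{p,f_n}(s_n)\leq(1-1/|B_n|)^{s_n}$ handles $s_n=\omega(|B_n|)$. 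When $p=1/2$, $A_L=n-L+1$ gives $\Sigma_L\propto(n-L+1)^2$, spreading mass over $L=\Theta(n)$; the trivial bound $|T|\leq n$ yields $\rho_{1/2,f_n}(s_n)\geq(1-n/|B_n|)^{s_n}\to 1$ when $s_n=o(|B_n|/n)$, and the converse follows from the $L$-marginal asymptotics and binomial concentration of $|T|\mid L$, which give $\mu_n(|T|<cn)\to 0$ as $c\downarrow 0$, so that $\rho_{1/2,f_n}(s_n)\leq\mu_n(|T|<cn)+\exp(-s_ncn/|B_n|)\to 0$ when $s_n=\omega(|B_n|/n)$.

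Part \ref{prop_part:tree tau} reduces to evaluating the two sums $\sum_L\Sigma_L$ and $\sum_L(2p)^L(1-p^L)A_L^2/L$. Off-criticality, $A_L$ is asymptotically geometric ($A_L\to(1-2p)^{-1}$ for $p<1/2$, $A_L\sim(2p)^{n-L+1}/(2p-1)$ for $p>1/2$), reducing both to convergent power series evaluated via $-\log(1-x)=\sum_{L\geq 1}x^L/L$. For $p<1/2$, the identities $\sum_L(2p)^L/L=-\log(1-2p)$ and $\sum_L(2p^2)^L/L=-\log(1-2p^2)$ directly produce the stated subcritical formula; for $p>1/2$, the corresponding series in $(2p)^{-L}$ and $(1/2)^L$ yield $\log(2p/(2p-1))-\log 2=\log(p/(2p-1))$, producing the supercritical formula. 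At criticality, $\var(f_n)=\tfrac12\sum_{L=1}^n(n-L+1)^2\sim n^3/6$ exactly, while
\begin{equation*}
\sum_{T\neq\emptyset}\frac{\hat f_n(T)^{2}}{|T|}=\sum_{L=1}^n\frac{(1-2^{-L})(n-L+1)^{2}}{L}\sim n^{2}\log n,
\end{equation*}
the leading behaviour coming from the $(n+1)^2 H_n$ piece of $(n-L+1)^2=(n+1)^2-2(n+1)L+L^2$, with the $(1-2^{-L})$ factor contributing only a bounded additive correction. Combining yields $\tau_{1/2,f_n}/|B_n|\sim 6\log n/n$. The main obstacle throughout is the Fourier bookkeeping; once the explicit $\Sigma_L$ formulas are in hand, the critical $\log n/n$ scaling emerges naturally from the interaction of the $(n-L+1)^2$ weight with the harmonic enhancement $1/L$ from the inverse $|T|$.
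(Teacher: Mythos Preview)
Your proposal is correct, and the underlying Fourier computation—your formulas for $\hat f_n(T)$, $\Sigma_L$, and $\sum_{T:L(T)=L}\hat f_n(T)^2/|T|$—is exactly the content of the paper's lemma~\ref{lem:tree energy distribution}. Part~\ref{prop_part:tree tau} is then handled identically to the paper: the same logarithmic series $\sum_L x^L/L=-\log(1-x)$ for the off-critical cases, and the same $(n+1)^2 H_n$ extraction for the critical case.

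Where you diverge is in parts~\ref{prop_part:tree off-critical rho} and~\ref{prop_part:tree critical rho}. The paper works analytically: it computes the closed form of the generating function $\EE(z^{\sW_{p,n}})$, substitutes $z=\e^{-t_n}$, and tracks its asymptotics case by case (for $p=1/2$ this requires a fairly delicate three-term Taylor expansion to see the cancellation that produces the $nt_n$ threshold), then invokes lemma~\ref{lem:discrete time vs continuous time rho asymptotics} to pass back to discrete time. Your route is more probabilistic and bypasses both the generating function and the discrete-versus-continuous-time comparison: you observe directly that under the spectral measure $\mu_n$ the depth $L(T)$ is tight when $p\neq 1/2$ (since $\Sigma_L$ is geometric in $L$ after normalisation), forcing $|T|=O(1)$ and giving the $|B_n|$ threshold by the elementary sandwich $(1-K/|B_n|)^{s_n}\le\rho\le(1-1/|B_n|)^{s_n}$; and at $p=1/2$ you exploit the deterministic bound $|T|\le n$ for the lower bound and the limiting $L$-marginal $\propto(n-L+1)^2$ together with binomial concentration of $|T|\mid L$ for the upper bound. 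This is shorter and more transparent than the paper's calculation; what the paper's approach buys in exchange is an explicit formula for $\EE(z^{\sW_{p,n}})$ valid for all $z$, which carries slightly more information than the threshold statements alone. One small point worth making explicit in your write-up: in the critical upper bound you need $\limsup_n\mu_n(|T|<cn)\to 0$ as $c\downarrow 0$ (not $\mu_n(|T|<cn)\to 0$ for fixed $n$), which follows by splitting on $\{L<4cn\}$ and applying Chernoff to the binomial on the complement.
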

Part~\ref{prop_part:tree tau} of proposition~\ref{prop:trees} clearly illustrates the phenomenon of critical speeding up; the
autocorrelation time for the cluster at the root is linear in the volume for all off-critical $p$, but is strictly sublinear at
criticality. 

Our other example concerns dynamical percolation on high-dimensional tori. For integer $d\ge2$, let $\pc(d)$ denote the critical
threshold of bond percolation on $\ZZ^d$. For integer $L>2$, we denote by $\torus$ the $d$-dimensional discrete torus of side length $L$,
whose vertex set we take to be $[-L/2,L/2)^d\cap\ZZ^d$.
\begin{proposition}
\label{prop:high-dimensional tori}
Let $d,L\in\posint$ with $L>2$. Consider discrete-time dynamical percolation on $\torus$, and let $f_{d,L}$ denote the size of the cluster
connected to the origin.  For any fixed but sufficiently large $d$, we have as $L\to\infty$ that
$$
\tau_{p,f_{d,L}} = 
\begin{cases}
  \Theta(L^d), & 0<p<\pc(d)\;, \\
  O(L^{d-1/13}), & p=\pc(d)\;,
\end{cases}
$$
and for a positive sequence $t_L$ that
$$
\rho_{p,f_{d,L}}(\ceil{t_L L^d}) =
\begin{cases}
  \Theta(1), & t_L=o(1),\, p<\pc(d)\;,\\
  o(1), & t_L=\omega(L^{-1/13}),\, p=\pc(d)\;.
\end{cases}
$$
\end{proposition}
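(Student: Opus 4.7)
My plan is to work in the Fourier--Walsh basis on $S^{B}$, where $B$ is the edge set of $\torus$, equipped with the product measure $\pi_{p,B}$. From the spectral representation established in section~\ref{subsec:spectral form of autocorrelations} (lemma~\ref{lem:spectral autocorrelation forms}) one has
\[
\contrho_{p,f}(t) = \frac{1}{\var(f)} \sum_{\emptyset \neq A \subseteq B} \hat f(A)^2 \, \e^{-|A| t},
\qquad
\conttau_{p,f} = \frac{1}{\var(f)} \sum_{\emptyset \neq A \subseteq B} \frac{\hat f(A)^2}{|A|}\;,
\]
together with the identity $\tau_{p,f}=|B|\conttau_{p,f}-1/2$; lemma~\ref{lem:discrete time vs continuous time rho asymptotics} then reduces the asymptotics of $\rho_{p,f_{d,L}}(\lceil t_L L^d\rceil)$ to those of $\contrho_{p,f_{d,L}}(t_L/d)$, so in both cases it suffices to estimate the continuous-time spectral sums.

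For the subcritical regime ($p<\pc(d)$), exponential decay of connection probabilities on $\ZZ^d$ gives $\var(f_{d,L})=\Theta(1)$ uniformly in $L$, and the trivial bound $\hat f(A)^2/|A|\le \hat f(A)^2$ already yields $\tau_{p,f_{d,L}}=O(L^d)$. For the matching lower bound, and for the claim $\rho_{p,f_{d,L}}(\lceil t_L L^d\rceil)=\Theta(1)$ when $t_L=o(1)$, I would show $\contrho_{p,f_{d,L}}(t_0)\ge c>0$ uniformly in $L$ for some sufficiently small fixed $t_0$. Conditioning on $C(0)$ at time $0$, with probability $\e^{-(|E(C(0))|+|\partial_E C(0)|)t_0}$ none of the bits on which $|C(0)|$ depends is updated during $[0,t_0]$, and on that event $f_{d,L}(Y_0)=f_{d,L}(Y_{t_0})$; the exponential tail of $|C(0)|$ makes the unconditional probability bounded away from $0$ once $t_0$ is small. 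Monotonicity of $\contrho$ in $t$ then produces the required uniform lower bounds on $\conttau_{p,f_{d,L}}$ and on $\contrho_{p,f_{d,L}}(t)$ for $t=o(1)$.

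For the critical regime the key is the Schramm--Steif randomised-algorithm/revealment bound discussed in section~\ref{subsec:revealment predictability theorem}. Decompose $f_{d,L}=\sum_{x\in\torus} g_x$ with $g_x=\indicator(0\leftrightarrow x)$; since $\hat f(A)=\sum_x \hat g_x(A)$, the triangle inequality in $\ell^2(\{A:1\le|A|\le k\})$ yields
\[
\sum_{1\le|A|\le k}\hat f(A)^2 \le \Big(\sum_x\sqrt{\sum_{1\le|A|\le k}\hat g_x(A)^2}\Big)^2.
\]
For each $x$ I would construct a randomised algorithm computing $g_x$ with small revealment $\delta_x$, for instance a cluster exploration started from randomly chosen vertices along (or near) the segment from $0$ to $x$. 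Schramm--Steif then gives $\sum_{|A|=m}\hat g_x(A)^2 \le \delta_x \, m \, \PP(0\leftrightarrow x)$, hence $\sum_{1\le|A|\le k}\hat g_x(A)^2 \le \delta_x k^2 \PP(0\leftrightarrow x)$. Substituting into the truncations
\[
\conttau_{p,f_{d,L}} \le \frac{k^2\big(\sum_x\sqrt{\delta_x\PP(0\leftrightarrow x)}\big)^2}{\var(f_{d,L})} + \frac{1}{k},
\qquad
\contrho_{p,f_{d,L}}(t) \le \frac{k^2\big(\sum_x\sqrt{\delta_x\PP(0\leftrightarrow x)}\big)^2}{\var(f_{d,L})} + \e^{-kt}
\]
and optimising $k$ together with the parameters of the algorithm, using high-dimensional mean-field inputs such as $\PP(0\leftrightarrow x)\asymp(|x|\vee1)^{-(d-2)}$ and a torus susceptibility estimate $\var(f_{d,L})\asymp L^{d/3}$, will produce the stated $L^{-1/13}$ factor.

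The main obstacle lies entirely in the critical case: designing a randomised algorithm whose revealment is small simultaneously at every edge of $\torus$, and then carefully balancing $k$, the algorithm's internal parameters (such as the radius of its exploration ball), and the high-dimensional percolation estimates to extract the explicit exponent $1/13$. The subcritical parts reduce to routine consequences of exponential decay of the cluster-size distribution.
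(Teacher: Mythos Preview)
Your subcritical sketch is in the right spirit but differs from the paper's route. Rather than a dynamical coupling argument, the paper simply lower-bounds a single level-one Fourier coefficient: for the edge $e$ from $0$ to $(1,0,\ldots,0)$ one shows $\langle f,\Psi_{\{e\}}\rangle\ge\sqrt{p(1-p)}\,(1-p)^{2d-1}$, and bounds $\var(f)\le\EE_{p,\ZZ^d}|C_{\ZZ^d}|^2<\infty$ via the torus--$\ZZ^d$ coupling and exponential decay. This gives $\EE(\sW_{p,L}^{-1})\ge c>0$ and $\EE(\e^{-t\sW_{p,L}})\ge c\,\e^{-t}$ immediately. Your conditioning argument would need extra work to convert ``$f(Y_0)=f(Y_{t_0})$ with probability bounded away from $0$'' into a lower bound on the covariance (e.g.\ by controlling $\EE[(f(Y_0)-f(Y_{t_0}))^2]$), so the Fourier route is the cleaner one here.

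In the critical case there is a genuine gap. First, a factual slip: it is $\chi=\EE f$ that is of order $L^{d/3}$ on the torus; the paper needs and proves the lower bound $\var(f)\gg L^{d/2}$, via Chebyshev together with $\PP(f\ge 2\chi)\gg L^{-d/6}$ from the Heydenreich--van~der~Hofstad results. More seriously, the decomposition $f=\sum_x g_x$ followed by the $\ell^2$ triangle inequality is quantitatively fatal. Using the torus two-point plateau $\PP(0\leftrightarrow x)\asymp |x|^{2-d}+L^{-2d/3}$ one gets $\sum_x\sqrt{\PP(0\leftrightarrow x)}\asymp L^{2d/3}$, so your numerator is at least of order $\delta\,k^2 L^{4d/3}$. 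Even granting the paper's variance lower bound $L^{d/2}$, the first term in your $\conttau$ estimate is then $\gg \delta\,k^2 L^{5d/6}$, and optimising in $k$ shows you would need $\delta=o(L^{-5d/6})$. No cluster-exploration algorithm can achieve this: the high-dimensional one-arm exponent is only $2$, so randomised-radius schemes give at best $\delta\ll L^{-c}$ for some small fixed $c$. Moreover, any algorithm that \emph{exactly} determines $g_x$ must query at least one edge incident to either $0$ or $x$ with probability $1$, so $\delta_x$ cannot be made small without allowing nonzero predictability, which you have not built in.

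The paper avoids both problems by applying the revealment--predictability bound (theorem~\ref{thm:revealment-predictability theorem}) \emph{directly to $f$}. One samples a uniform radius $\sR\in[\lfloor L^\kappa\rfloor]$, runs breadth-first search from $\partial\subbox_\sR$ inward to discover all clusters touching the boundary, and only if $0\leftrightarrow\partial\subbox_\sR$ continues to explore $C_0$ throughout the torus. On $\{0\not\leftrightarrow\partial\subbox_\sR\}$ the algorithm does not determine $f$, but one has the deterministic bound $f\le \sR^d$, so $\epsilon_{\mu,\pi,f}\ll L^{2d\kappa}/\var(f)\ll L^{2d\kappa-d/2}$. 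The revealment of the first stage is controlled by the Kozma--Nachmias one-arm bound, and that of the second stage by the Hutchcroft--Michta--Slade torus two-point bound, yielding $\delta_{\mu,\pi}\ll L^{-2\kappa/3}$. Feeding these into proposition~\ref{prop:revealment-predictability corollary} and optimising $\kappa$ produces the exponent $1/13$. The essential idea you are missing is that one should not decompose $f$ and pay the $\ell^2$ triangle-inequality price; instead, accept an approximate algorithm for $f$ itself and let the predictability term absorb the error.
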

We note that the implied constants in the asymptotic statements in proposition~\ref{prop:high-dimensional tori} may in general depend on $p$
and $d$.

\section{Spectral representations}
\label{sec:spectral representations}
\subsection{Autocorrelations}
\label{subsec:spectral form of autocorrelations}
Given $p\in(0,1)$ and a finite set $\bits$, let 
$l^2(\pi_{p,\bits})$ denote the vector space $\RR^{\cube}$ endowed with the inner product
\begin{equation}
\<f,g\>_{p,\bits} := \sum_{x\in\cube}f(x)g(x)\pi_{p,\bits}(x), \qquad
\text{ for all } f,g\in\RR^{\cube}\;.
\end{equation}
Let
\begin{equation}
  \nu_p:=\sqrt{(1-p)/p}\;,
  \label{eq:nu definition}
\end{equation}
and for each $A\subseteq \bits$, define $\Psi_{A}^{p,\bits}:\cube\to\RR$ via
\begin{equation}
    \Psi_{A}^{p,\bits}(x) = \prod_{b\in A}\,x_i\,\nu_p^{x_i}, \qquad\text{ for all } x\in\cube\;.
    \label{eq:explicit eigenvectors}
\end{equation}
It can be easily verified that $\{\Psi_{A}^{p,\bits}\}_{A\subseteq \bits}$ is an orthonormal basis for $l^2(\pi_{p,\bits})$, and that, for
each $A\subseteq\bits$, $\Psi_A^{p,\bits}$ is an eigenvector of $P_{p,\bits}$ with eigenvalue
\begin{equation}
\lambda_A^{\bits}:= 1-\frac{|A|}{|\bits|}\;.
\label{eq:explicit eigenvalues}
\end{equation}
\begin{remark}
  \label{rem:no slowing down}
  Combining the explicit expression for the eigenvalues from equation~\eqref{eq:explicit eigenvalues} with general spectral results on
  autocorrelations in~\cite[equations (9.2.26) and (9.2.27)]{MadrasSlade1996} immediately implies that for any $p\in(0,1)$ and any function
  $f:\cube\to\RR$ we have $\tau_{f,p}=O(|\bits|)$ for large $|\bits|$. In particular, dynamical percolation does not suffer from critical
  slowing down.
\end{remark}

Given a non-constant function $f:\cube\to\RR$, it is convenient to introduce the random variable $\sW_{p,f}$, taking values in $[\bits]$,
with probability mass function
\begin{equation}
\PP(\sW_{p,f}=k)=\sum_{A\in\binom{\bits}{k}}
\frac{\<f,\Psi_A^{p,\bits}\>_{p,\bits}^2}{\var(f(Z_0^{p,\bits}))}\;.
\label{eq:energy sample definition}
\end{equation}
Autocorrelation functions and autocorrelation times for both the discrete time process $(Z_s^{p,\bits})_{s\in\naturals}$ and continuous time
process $(Y_t^{p,\bits})_{t\in[0,\infty)}$ can be conveniently expressed in terms of $\sW_{p,f}$. The following lemma, whose proof we omit,
  is a simple consequence of the spectral decomposition of $P_{p,\bits}$, using the explicit expressions from equations~\eqref{eq:explicit
    eigenvectors} and~\eqref{eq:explicit eigenvalues}.
\begin{lemma}
\label{lem:spectral autocorrelation forms}
Let $p\in(0,1)$, let $\bits$ be a finite set, and let $f:\cube\to\RR$ be a non-constant function. Then:
\begin{enumerate}[label=(\roman*)]
\item\label{lem_part:discrete rho in terms of W} $\rho_{p,f}(s) = \EE\left(1-\dfrac{\sW_{p,f}}{|\bits|}\right)^s$ for all $s\in\naturals$
\item\label{lem_part:discrete tau in terms of W} $\tau_{p,f} = |\bits|\,\EE(\sW_{p,f}^{-1})-1/2$ 
\item\label{lem_part:continuous rho in terms of W} $\tilde{\rho}_{p,f}(t) = \EE(\e^{-t\sW_{p,f}})$ for all $t\in[0,\infty)$
\item\label{lem_part:continuous tau in terms of W} $\tilde{\tau}_{p,f} = \EE(\sW_{p,f}^{-1})$
\end{enumerate}
\end{lemma}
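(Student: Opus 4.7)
The plan is to proceed by expanding $f$ in the eigenbasis $\{\Psi_A^{p,\bits}\}_{A\subseteq\bits}$ of $P_{p,\bits}$ and then reading off all four identities from the resulting spectral representation. Write $\hat{f}(A):=\<f,\Psi_A^{p,\bits}\>_{p,\bits}$, so that $f = \sum_{A\subseteq\bits} \hat{f}(A)\,\Psi_A^{p,\bits}$ in $l^2(\pi_{p,\bits})$. Since $\Psi_\emptyset^{p,\bits}\equiv 1$, we have $\EE(f(Z_0^{p,\bits}))=\hat{f}(\emptyset)$ and, by Parseval, $\var(f(Z_0^{p,\bits}))=\sum_{A\neq\emptyset}\hat{f}(A)^2$. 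In particular, by~\eqref{eq:energy sample definition}, $\sW_{p,f}$ is a well-defined $[\bits]$-valued random variable with $\PP(\sW_{p,f}=k)=\sum_{A\in\binom{\bits}{k}}\hat{f}(A)^2/\var(f(Z_0^{p,\bits}))$.

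The first step is to compute the two-time correlations of the basis functions. Since $\{\Psi_A^{p,\bits}\}$ is an orthonormal eigenbasis of $P_{p,\bits}$ with eigenvalue $\lambda_A^\bits=1-|A|/|\bits|$, stationarity of $Z^{p,\bits}$ and the spectral decomposition give
\begin{equation*}
\EE\bigl(\Psi_A^{p,\bits}(Z_0^{p,\bits})\,\Psi_{A'}^{p,\bits}(Z_s^{p,\bits})\bigr) = (\lambda_A^\bits)^s\,\indicator(A=A')\;.
\end{equation*}
Bilinearity then yields $\cov(f(Z_0^{p,\bits}),f(Z_s^{p,\bits})) = \sum_{A\neq\emptyset}\hat{f}(A)^2\,(\lambda_A^\bits)^s$, and dividing by the variance and regrouping by $|A|$ gives
\begin{equation*}
\rho_{p,f}(s) = \sum_{k=1}^{|\bits|}\PP(\sW_{p,f}=k)\,\left(1-\frac{k}{|\bits|}\right)^s = \EE\!\left(1-\frac{\sW_{p,f}}{|\bits|}\right)^{\!s}\;,
\end{equation*}
which is~\ref{lem_part:discrete rho in terms of W}. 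For~\ref{lem_part:continuous rho in terms of W}, the same argument applies, using instead that the semigroup $\e^{tQ_{p,\bits}}=\e^{t|\bits|(P_{p,\bits}-I)}$ has eigenvalue $\e^{-t|A|}$ on $\Psi_A^{p,\bits}$, producing $\contrho_{p,f}(t)=\EE(\e^{-t\sW_{p,f}})$.

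The two time-scale identities are then routine consequences via Fubini. For~\ref{lem_part:discrete tau in terms of W}, substitute~\ref{lem_part:discrete rho in terms of W} into~\eqref{eq:discrete autocorrelation time} and sum the geometric series termwise (justified by non-negativity and $\sW_{p,f}\ge 1$):
\begin{equation*}
\tau_{p,f} = \frac{1}{2}\left(-1 + 2\sum_{s=0}^{\infty}\EE\!\left(1-\frac{\sW_{p,f}}{|\bits|}\right)^{\!s}\right) = |\bits|\,\EE(\sW_{p,f}^{-1}) - \frac{1}{2}\;.
\end{equation*}
Similarly for~\ref{lem_part:continuous tau in terms of W}, Tonelli gives $\conttau_{p,f}=\EE\!\int_0^\infty \e^{-t\sW_{p,f}}\,\d t = \EE(\sW_{p,f}^{-1})$.

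There is no real obstacle here: the content of the lemma is essentially a bookkeeping exercise once the eigenvalues and eigenvectors from~\eqref{eq:explicit eigenvectors}--\eqref{eq:explicit eigenvalues} are in hand. The only point worth care is verifying that the spectral sum and the sum/integral in $s$ or $t$ may be interchanged; since $\sW_{p,f}\ge 1$ almost surely and all integrands are non-negative, Tonelli (or dominated convergence, in the discrete case, after noting $|1-\sW_{p,f}/|\bits||\le 1-1/|\bits|<1$) suffices throughout.
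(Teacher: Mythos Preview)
Your proposal is correct and is precisely the argument the paper has in mind: the paper omits the proof, noting only that it ``is a simple consequence of the spectral decomposition of $P_{p,\bits}$, using the explicit expressions from equations~\eqref{eq:explicit eigenvectors} and~\eqref{eq:explicit eigenvalues}.'' You have filled in exactly those details, and your justification of the interchange of sums/integrals via Tonelli is the right level of care.
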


As a simple example, fix $\gamma\in(0,1)$ and consider $f:S^{[n]}\to\RR$ defined by
\begin{equation}
  \label{eq:toy example f}
  f:= n^{-\gamma/2}\,\Psi_{\{1\}}^{p,[n]} + \sqrt{1-n^{-\gamma}}\,\Psi_{[n]}^{p,[n]}\;.
\end{equation}
If $p=1/2$, then $\Psi_{\{1\}}^{p,[n]}$ is referred to as a dictator function, and $\Psi_{[n]}^{p,[n]}$ is the parity function; see
e.g.~\cite{O'Donnell2014}. For any $p\in(0,1)$, however, equation~\eqref{eq:energy sample definition} yields
\begin{equation}
  \label{eq:toy example W}
  \PP(\sW_{p,f}=k)=n^{-\gamma}\,\indicator(k=1)+(1-n^{-\gamma})\,\indicator(k=n)\;.
\end{equation}
Lemma~\ref{lem:spectral autocorrelation forms} then implies that $\lim_{n\to\infty}\rho_{p,f}(1)=0$ and $\tau_{p,f}\sim n^{1-\gamma}$ as
$n\to\infty$. In particular, we see in this example that while the timescale at which $\rho_{p,f}$ decays to zero is of constant order in
$n$, the autocorrelation time diverges as a power law. This provides a simple example, as mooted at the end of section~\ref{subsec:dynamical
  percolation and autocorrelations}, that the timescales characterising autocorrelation decay and speeding up are not related in general.

Propositions~\ref{prop:trees} and~\ref{prop:high-dimensional tori} will be proved by establishing appropriate results for the distribution
of $\sW_{p,f}$, and then applying lemma~\ref{lem:spectral autocorrelation forms}. Lemma~\ref{lem:spectral autocorrelation forms} also
provides a straightforward proof of lemma~\ref{lem:discrete time vs continuous time rho asymptotics}.
\begin{proof}[Proof of lemma~\ref{lem:discrete time vs continuous time rho asymptotics}]
Let $n\in\posint$ and $s\in\naturals$. From lemma~\ref{lem:spectral autocorrelation forms} we have
\begin{align} 
\rho_{p,f_n}(s) &= \sum_{k\in[\bits_n]}\PP(\sW_{p,f}=k)\left(1-\frac{k}{|\bits_n|}\right)^s \\
&= \sum_{k=1}^{|\bits_n|-1} \e^{-ks/|\bits_n|}\PP(\sW_{p,f_n}=k)
\e^{-s\left(k/|\bits_n|\right)^2 h\left(k/|\bits_n|\right)}
\end{align}
where $h:[0,1)\to[0,\infty)$ is defined via 
\begin{equation}
h(x):=\frac{-x-\log(1-x)}{x^2}=\sum_{l=0}^{\infty}\frac{x^l}{l+2}
\end{equation}
and the simple identity $\e^{x}(1-x)=\e^{-x^2 h(x)}$ has been employed. This immediately yields the upper bound
\begin{align}
    \rho_{p,f_n}(s_n) &\le \sum_{k=1}^{|\bits_n|}\e^{-k s_n/|\bits_n|}\PP(\sW_{p,f_n}=k)\\
    &= \tilde{\rho}_{p,f_n}(s_n/|\bits_n|)\;.
\end{align}

Now let $c_n$ be a positive sequence satisfying $c_n=\omega(1)$ and $c_n=o(\sqrt{s_n})$, and set
$a_n:=\lfloor |\bits_n| c_n/s_n\rfloor$. Then
\begin{align}
\rho_{p,f_n}(s_n) 
&\ge \sum_{k=1}^{a_n}\e^{-k s_n/|\bits_n|}\PP(\sW_{p,f_n}=k)\e^{-s_n \left(k/|\bits_n|\right)^2 h\left(k/|\bits_n|\right)}\\
&\ge \sum_{k=1}^{a_n}\e^{-k s_n/|\bits_n|}\PP(\sW_{p,f_n}=k)\e^{-(c_n/\sqrt{s_n})^2 h\left(c_n/s_n\right)}\\
&= \tilde{\rho}_{p,f_n}(s_n/|\bits_n|)+o(1)
\end{align}
where the second inequality utilised the monotonicity of $h$. The stated result now follows.
\end{proof}

\subsection{Randomised bit-query algorithms}
\label{subsec:revealment predictability theorem}
In this section we discuss a slight variant of~\cite[theorem 1.8]{SchrammSteif2010}, which provides an upper bound for $\PP(\sW_{p,f}=k)$ in
terms of certain properties of a randomised algorithm.

By a ``bit querying algorithm", we picture a procedure which takes as input a bit string, $x\in\cube$, and outputs a subset of bits,
$J\subseteq\bits$, in such a way that bits are added one-by-one to $J$, and the choice of the next bit to add can depend on the states in
$x$ of the bits already examined. To make this more precise, we offer the following concrete model.

We define a \emph{query tree} on $\bits$ to be a binary tree of depth at least 1, with its non-leaf vertices labelled from $\bits$ and its
edges labelled from $S$, such that the following two conditions hold:
\begin{enumerate}[label=(\roman{*})]
    \item on each root-to-leaf path, each $i\in\bits$ occurs at most once
    \item no vertex has two descendent edges (i.e. edges connecting to descendants) with the same label
\end{enumerate}
We denote by $\queries$ the set of all query trees on $\bits$. By construction, $\queries$ is finite. Now let $x\in\cube$ and $t\in
\queries$.  If $v\in V(t)$ has label $b_v\in B$, then we abuse notation by writing $x(v):=x(b_v)$. We define the ``query path for $x$ in
$t$" to be the unique path $v_0,\ldots v_k$ in $t$ satisfying:
\begin{enumerate}[label=(\roman{*})]
    \item $v_0$ is the root
    \item for each $0\le i \le k-1$, the edge $v_i v_{i+1}$ has label $x(v_i)$
    \item $v_k$ has no descendent edge with label $x(v_k)$
\end{enumerate}
In words, given $x$, at each step, one looks at the bit in $x$ corresponding to the current vertex in $t$; if there is an outgoing edge with
this label, traverse it; if not, terminate.  Finally, we define $J:\queries\times\cube\to\sP(\bits)$ so that for each $(t,x)\in
\queries\times\cube$, the image $J(t,x)$ is the set of vertex labels appearing on the query path for $x$ in $t$.

\begin{remark}
Note that the above definition of a query tree is very closely related to the notion of a decision tree (see
e.g. ~\cite{O'Donnell2014}). However, we emphasise that, unlike a decision tree, the leaves of a query tree play no special role; in
particular, they are not labelled by the possible output values of any real-valued function.
\end{remark}

We now wish to consider $J$ acting on random elements of $\queries$ and $\cube$.  We begin with some helpful notation. For any
$I\subseteq\bits$ and $x\in\cube$, we denote by $x_I$ the restriction of $x$ to $I$; i.e. $x_I:I\to S$ is defined by $x_I(i)=x(i)$ for all
$i\in I$. Let $\bar{I}:=\bits\setminus I$. For any $y\in S^I$ and $z\in S^{\bar{I}}$, we define $y\oplus z$ to be the unique element of
$S^{\bits}$ satisfying
\begin{equation}
  \label{eq:oplus definition}
    (y\oplus z) (i) =
    \begin{cases}
    y(i), & i \in I,\\
    z(i), & i \in \bar{I}.
    \end{cases}
\end{equation}
Finally, if $g:\cube\to\RR$ then for any $I\subseteq\bits$ and $y\in S^I$ we define the map $g_{I|y}:\cube\to\RR$ via
\begin{equation}
    g_{I|y}(x) = g(y\oplus x_{\bar{I}}), \qquad\text{ for all } x\in\cube\;.
\end{equation}
We are now ready to define the two key quantities that will be used to bound $\PP(\sW_{p,f}=k)$. 
\begin{definition}
Let $B$ be a finite set, let $\pi$ be a probability measure on $\cube$, and let $\mu$ be a probability measure on $\queries$.  Let
$X\sim\pi$ and $T\sim\mu$ be independent. The \emph{revealment} of $\mu$ (with respect to $\pi$) is
\begin{equation}
    \delta_{\mu,\pi}:= \max_{i\in\bits}\, \PP[J(T,X)\ni i]
\end{equation}
and for any non-constant $f:\cube\to\RR$ the \emph{predictability} of $f$ (with respect to $\mu$ and $\pi$) is 
\begin{equation}
\epsilon_{\mu,\pi,f}:=\frac{\EE[\var_{\pi}(f_{J|X_{J}})]}{\var(f(X))}
\end{equation}
where $f_{J|X_J}$ abbreviates $f_{J(T,X)|X_{J(T,X)}}$, and the notation $\var_{\pi}(f_{J|X_{J}})$ means that for each fixed realisation of
$(T,X)$, one computes the variance with respect to $\pi$ of the map $f_{J|X_J}:\cube\to\RR$.
\end{definition}
\begin{remark}
We emphasise that since $f_{J|X_J}$ is a random map, $\var_{\pi}(f_{J|X_J})$ is a random variable on the space
$(\queries\times\cube,\mu\times\pi)$. We also note the bounds $\delta_{\mu,\pi}>0$, which follows from the requirement that
query trees have depth at least one, and $\epsilon_{\mu,\pi,f}\le 1$, which is established in corollary~\ref{cor:predictability is bounded
  above by 1}.
\end{remark}

We are now ready to state the following slight variant of the revealment theorem of~\cite{SchrammSteif2010}.
\begin{theorem}
\label{thm:revealment-predictability theorem}
Let $p\in(0,1)$ and let $\bits$ be a finite set. Let $\pi:=\pi_{p,\bits}$, and let $\mu$ be any distribution on $\queries$. For any
non-constant function $f:\cube\to\RR$ we have
$$
\PP(\sW_{p,f}=k) \le 2\,\epsilon_{\mu,\pi,f} + 2\,\delta_{\mu,\pi}\,k.
$$
\end{theorem}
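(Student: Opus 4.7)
The plan is to express $\PP(\sW_{p,f}=k)$ via the Fourier identity~\eqref{eq:energy sample definition}, split each Fourier coefficient according to whether $A\subseteq J$ or not using an $(a+b)^2\le2(a^2+b^2)$ step, and bound the two resulting halves separately---one by $\epsilon_{\mu,\pi,f}\var(f)$ via the algorithmic residual, the other by $\delta_{\mu,\pi}k\var(f)$ via an adaptation of the Schramm--Steif revealment argument. I may assume $f$ is centred. Let $T\sim\mu$ and $X\sim\pi_{p,\bits}$ be independent with $J:=J(T,X)$, and write
\[
\langle f,\Psi_A^{p,\bits}\rangle_{p,\bits} = \EE\bigl[f(X)\Psi_A^{p,\bits}(X)\indicator(A\not\subseteq J)\bigr] + \EE\bigl[f(X)\Psi_A^{p,\bits}(X)\indicator(A\subseteq J)\bigr] =: E_A + F_A.
\]
Then $\sum_{|A|=k}\langle f,\Psi_A^{p,\bits}\rangle_{p,\bits}^2 \le 2\sum_{|A|=k}E_A^2 + 2\sum_{|A|=k}F_A^2$, where $\var(f):=\var(f(Z_0^{p,\bits}))=\|f\|_{p,\bits}^2$.

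For $\sum_{|A|=k}E_A^2$, I would introduce the residual $g(T,X):=f(X)-\EE[f\mid T,X_J]$; by the definition of predictability $\EE[g^2]=\epsilon_{\mu,\pi,f}\var(f)$. For each fixed $T$, Parseval gives $\sum_{A\subseteq\bits}\hat g_T(A)^2=\|g(T,\cdot)\|_{p,\bits}^2$ where $\hat g_T(A):=\langle g(T,\cdot),\Psi_A^{p,\bits}\rangle_{p,\bits}$, so averaging over $T$ yields $\sum_{A}\EE_T[\hat g_T(A)^2]=\EE[g^2]=\epsilon_{\mu,\pi,f}\var(f)$. On the other hand the tower property together with the identity $\EE[\Psi_A^{p,\bits}(X)\mid T,X_J]=\Psi_A^{p,\bits}(X_J)\indicator(A\subseteq J)$---itself a consequence of $\EE_{\pi_p}[\Psi_{\{i\}}^{p,\bits}]=0$ and the conditional independence of $X_{\bar J}$ from $(T,X_J)$---shows that $\EE_T[\hat g_T(A)]=E_A$. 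Jensen's inequality and restriction to level $k$ then give $\sum_{|A|=k}E_A^2 \le \sum_{|A|=k}\EE_T[\hat g_T(A)^2] \le \epsilon_{\mu,\pi,f}\var(f)$.

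For $\sum_{|A|=k}F_A^2$, I would observe that $F_A$ is the averaged Fourier coefficient at $A$ of the algorithm-determined function $\EE[f\mid T,X_J]$, and adapt the revealment argument of~\cite{SchrammSteif2010} to this real-valued, non-uniform setting. For each $A$ with $A\subseteq J$, let $\tau_A$ denote the first bit of $A$ examined along the query path; this decomposes $\indicator(A\subseteq J)=\sum_{i\in A}\indicator(\tau_A=i)\indicator(A\subseteq J)$. Cauchy--Schwarz applied to this sum of $|A|=k$ terms reduces $F_A^2$ to $k$ times the sum over $i\in A$ of squared summands, and each such summand is analysed by conditioning on the partial query history $\mathcal{H}_i^-:=(T,H_i^-,X_{H_i^-})$ up to (but excluding) the query of $i$. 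On $\{\tau_A=i\}$ the inner conditional expectation becomes a Fourier coefficient of the restricted function $f_{H_i^-|X_{H_i^-}}$; a second Cauchy--Schwarz plus the revealment bound $\PP(\tau_A=i)\le\PP(i\in J)\le\delta_{\mu,\pi}$, combined with a Parseval identity for these restricted functions, should then produce the bound $\sum_{|A|=k}F_A^2\le\delta_{\mu,\pi}k\var(f)$.

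The main obstacle is exactly this last bound. A naive combination of the two Cauchy--Schwarz steps yields only $\sum_{|A|=k}F_A^2\le\delta_{\mu,\pi}k\,\EE[f^2|J|]$, and $\EE[|J|]$ is in general comparable to $|\bits|\delta_{\mu,\pi}$ rather than to a constant, so an additional factor of $|\bits|$ would spoil the bound. The correct telescoping requires that, after invoking Parseval on the restricted $f_{H_i^-|X_{H_i^-}}$, the sum of squared conditional Fourier coefficients over $|A|=k$ with $i\in A$ and then over $i\in\bits$ collapse back to $k\var(f)$ via the tower property, rather than to $|\bits|\var(f)$; arranging this collapse cleanly is the delicate combinatorial-probabilistic point at which the Schramm--Steif strategy must be carefully deployed. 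Once it is established, combining the two halves and dividing by $\var(f)$ yields the stated inequality.
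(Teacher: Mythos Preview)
Your bound on $\sum_{|A|=k}E_A^2$ is correct: the residual $g(T,X)=f(X)-\EE[f\mid T,X_J]$ satisfies $\EE[g^2]=\EE[\var_\pi(f_{J|X_J})]=\epsilon_{\mu,\pi,f}\var(f)$, and the identity $\EE_T[\hat g_T(A)]=E_A$ together with Jensen and Parseval give $\sum_{|A|=k}E_A^2\le\epsilon_{\mu,\pi,f}\var(f)$. This part is fine and is a pleasant alternative route to the predictability term.

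The gap is exactly where you place it: the bound $\sum_{|A|=k}F_A^2\le\delta_{\mu,\pi}k\var(f)$ is not established, and the $\tau_A$/history-conditioning sketch does not obviously close. The difficulty is structural: $F_A$ is a Fourier coefficient of the random function $h_T:=\EE[f\mid T,X_J]$, and although $h_T$ is determined by the algorithm, it is \emph{not} supported on level~$k$, so there is no analogue of the restriction identity that makes the Schramm--Steif telescoping collapse. Your two Cauchy--Schwarz steps then overcount by a factor comparable to the number of queried bits, precisely as you note.

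The paper avoids the split $E_A+F_A$ altogether. It sets $g:=\sum_{|A|=k}\langle f,\Psi_A\rangle\Psi_A$, the level-$k$ projection of $f$, and uses the lemma $\EE[\langle g_{J|X_J},f_{J|X_J}\rangle]=\langle g,f\rangle=\|g\|^2$. Splitting $f_{J|X_J}$ into its mean $\langle f_{J|X_J},\Psi_\emptyset\rangle$ and its centred part, and applying Cauchy--Schwarz twice, yields
\[
\|g\|^2\le\sqrt{\var(f)\,\EE[\langle g_{J|X_J},\Psi_\emptyset\rangle^2]}+\|g\|\sqrt{\EE[\var_\pi(f_{J|X_J})]}\,.
\]
The predictability term is now immediate. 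For the revealment term, the crucial point is that because $g$ is supported exactly on level~$k$, for $|A|=k$ one has $\langle g_{I|y},\Psi_A\rangle=\langle g,\Psi_A\rangle\,\indicator(A\cap I=\emptyset)$; summing over $A$ and using $\PP(A\cap J\neq\emptyset)\le k\delta_{\mu,\pi}$ gives $\EE[\langle g_{J|X_J},\Psi_\emptyset\rangle^2]\le k\delta_{\mu,\pi}\|g\|^2$ directly, with no further Cauchy--Schwarz and no history conditioning. This restriction identity for the level-$k$ projection is the device that replaces your unfinished telescoping, and it is the missing idea in your proposal.
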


We note that the statement of~\cite[theorem 1.8]{SchrammSteif2010} refers specifically to the case of randomised algorithms which exactly
determine $f$, meaning that the predictability term is identically zero. A generalisation to randomised algorithms which instead merely
approximate $f$ is, however, presented in~\cite[equation (2.5)]{SchrammSteif2010}. Our use of query trees is intended simply to provide a
concrete model for these randomised approximation schemes, where the focus is shifted away from approximating $f$, to simply constructing a
rule determining the set of queried bits.  We also note that~\cite[theorem 1.8]{SchrammSteif2010} and~\cite[equation
  (2.5)]{SchrammSteif2010} only refer to the special case that $p=1/2$, however the generalisation to arbitrary $p\in(0,1)$ is
straightforward. For completeness, we present a full proof of theorem~\ref{thm:revealment-predictability theorem} in
section~\ref{sec:revealment predictability proof}.

Combining theorem~\ref{thm:revealment-predictability theorem} with lemma~\ref{lem:spectral autocorrelation forms} provides the following
proposition, which is the main tool applied in section~\ref{sec:high dimensions}.  In short, it reduces
the problem of bounding $\tau_{p,f}$ and $\rho_{p,f}(s)$ to the problem of constructing a distribution $\mu$ on $\queries$ for which the
revealment and predictability both vanish sufficiently fast as $|\bits|$ increases.
\begin{proposition}
  \label{prop:revealment-predictability corollary}
Let $p\in(0,1)$ and let $\bits$ be a finite set with $n:=|\bits|>1$. Let $\pi:=\pi_{p,\bits}$, and let $\mu$ be any distribution on
$\queries$. For any non-constant function $f:\cube\to\RR$ we have
\begin{equation}
\tau_{p,f}
\le 
5\,n\,\log(n)\,\epsilon_{\mu,\pi,f} 
+ 3\,n\sqrt{\delta_{\mu,\pi}}\;,
\label{eq:revealment-predictability tau bound}
\end{equation}
and
\begin{equation}
\rho_{p,f}(\ceil{n\,t})\le 2\left(\frac{\epsilon_{\mu,\pi,f}}{t}+\frac{\delta_{\mu,\pi}}{t^2}\right)\;,
\qquad \text{ for all } t\in(0,\infty)\;.
\label{eq:revealment-predictability rho bound}
\end{equation}
\begin{proof}
For any $a_n\ge1$, we have from theorem~\ref{thm:revealment-predictability theorem} that
\begin{align}
\EE(\sW_{p,f}^{-1}) &\le 2\epsilon_{\mu,\pi,f}\sum_{k=1}^{\lfloor a_n\wedge n\rfloor}\frac{1}{k}
+2\delta_{\mu,\pi}(\lfloor a_n \rfloor \wedge n) +\frac{1}{a_n}\\
&\le 5\,\log(n)\,\epsilon_{\mu,\pi,f} + 2\delta_{\mu,\pi}\,a_n + a_n^{-1}
\end{align}
where in obtaining the second inequality we used the fact (see e.g.~\cite[equation~(6.60)]{GrahamKnuthPatashnik1994}) that for all $n\ge 2$
\begin{equation}
\sum_{k=1}^n\frac{1}{k}\le \frac{5}{2}\log(n)\;.
\end{equation}
The first stated result then follows by choosing $a_n=1/\sqrt{\delta_{\mu,\pi}}$ and applying lemma~\ref{lem:spectral autocorrelation
  forms}.

Moreover, theorem~\ref{thm:revealment-predictability theorem} implies that for all $t\ge0$
\begin{align}
    \EE(\e^{-t\,\sW_{p,f}})
    &\le 2\epsilon_{\mu,\pi,f} \sum_{k=1}^n \e^{-t\,k}+2\delta_{\mu,\pi}\sum_{k=1}^n k\,\e^{-t\,k}\\
    &\le 2\epsilon_{\mu,\pi,f}\, \frac{1}{\e^t-1} + 2\delta_{\mu,\pi}\,\frac{\e^t}{(\e^t-1)^2} \\
    &= \frac{2\epsilon_{\mu,\pi,f}}{t} + \frac{2\delta_{\mu,\pi}}{t^2}\;.
\end{align}
But from lemma~\ref{lem:spectral autocorrelation forms}, and the fact that $1-x\le \e^{-x}$ for all $x\in\RR$, we have
\begin{align}
    \rho_{p,f}(\ceil{n t}) &\le \EE\left(1-\frac{\sW_{p,f}}{n}\right)^{n t}\\
    &\le \EE(\e^{-t\sW_{p,f}})
\end{align}
which establishes the second stated result.
\end{proof}
\end{proposition}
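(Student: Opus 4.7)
The plan is to convert theorem~\ref{thm:revealment-predictability theorem} into upper bounds on $\EE(\sW_{p,f}^{-1})$ and $\EE(\e^{-t\sW_{p,f}})$, since by lemma~\ref{lem:spectral autocorrelation forms} these control $\tau_{p,f}$ and, via the elementary inequality $1-x \le \e^{-x}$ valid for all real $x$, also $\rho_{p,f}(\lceil nt\rceil)$. Write $\epsilon := \epsilon_{\mu,\pi,f}$ and $\delta := \delta_{\mu,\pi}$ for brevity.

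For the autocorrelation time, I would decompose
$$
\EE(\sW_{p,f}^{-1}) \;=\; \sum_{k=1}^{n} \frac{1}{k}\,\PP(\sW_{p,f}=k)
$$
at a cutoff $a \in [1,n]$. On the head $k \le a$, apply theorem~\ref{thm:revealment-predictability theorem} termwise to obtain a contribution of at most $2\epsilon\,H_a + 2\delta\,a$, where $H_a$ is a harmonic number and can be bounded by $(5/2)\log n$ via the classical estimate quoted in the excerpt. On the tail $k>a$, use $1/k \le 1/a$ together with $\sum_k \PP(\sW_{p,f}=k) \le 1$ to obtain a contribution of at most $1/a$. Choosing $a = 1/\sqrt{\delta}$ balances $2\delta a$ against $1/a$ and yields $\EE(\sW_{p,f}^{-1}) \le 5\log(n)\,\epsilon + 3\sqrt{\delta}$, from which the stated bound on $\tau_{p,f}$ is immediate by lemma~\ref{lem:spectral autocorrelation forms}.

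For the autocorrelation function, the inequality $1-x \le \e^{-x}$ and lemma~\ref{lem:spectral autocorrelation forms} give $\rho_{p,f}(\lceil nt\rceil) \le \EE(\e^{-t\sW_{p,f}})$. Expanding and applying theorem~\ref{thm:revealment-predictability theorem} termwise produces a geometric series bounded by $1/(\e^t-1)$ and an arithmetic-geometric series bounded by $\e^t/(\e^t-1)^2$. Using $\e^t-1 \ge t$ and the Taylor bound $\cosh t \ge 1 + t^2/2$, which rearranges to $(\e^t-1)^2 \ge t^2 \e^t$, these two are at most $1/t$ and $1/t^2$ respectively, giving exactly the stated bound. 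The only mildly delicate step is the cutoff choice $a = 1/\sqrt{\delta}$; if this happens to exceed $n$ the inequality is essentially vacuous, so no separate case analysis is needed, and overall the argument is a direct computation rather than anything involving a genuine obstacle.
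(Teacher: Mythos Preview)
Your proposal is correct and follows essentially the same route as the paper: split $\EE(\sW_{p,f}^{-1})$ at a cutoff $a$, apply theorem~\ref{thm:revealment-predictability theorem} on the head and the trivial bound on the tail, then optimise with $a=1/\sqrt{\delta}$; and for $\rho$, bound $\rho_{p,f}(\lceil nt\rceil)$ by $\EE(\e^{-t\sW_{p,f}})$ and sum the resulting series. Your justification of $\e^t/(\e^t-1)^2 \le 1/t^2$ via $\cosh t \ge 1 + t^2/2$ is in fact more explicit than the paper, which simply writes this step as an equality without comment.
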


\section{Binary Trees}
\label{sec:binary trees}
We now prove proposition~\ref{prop:trees}.  For $n\in\posint$, let $(V_n,B_n)$ denote the perfect binary tree of depth $n$, and let $o$
denote its root. For $p\in(0,1)$, we consider discrete-time dynamical percolation on $(V_n,B_n)$, and let $f_n$ denote the size of the
cluster containing $o$.  To simplify notation, we denote variance with respect to $\pi_{p,B_n}$ by $\var_p(\cdot)$, and set
$\sW_{p,n}:=\sW_{p,f_n}$. Lemma~\ref{lem:tree energy distribution} presents a simple exact expression for the distribution of $\sW_{p,n}$,
valid for all $n$.
\begin{lemma}
  \label{lem:tree energy distribution} Let $n\in\posint$ and $p\in(0,1)$. For all $k \in [\bits_n]$ we have 
  $$
    \var_p(f_n)\,\PP(\sW_{p,n} = k) = \nu_p^{2k}\,\sum_{d=1}^n\binom{d-1}{k-1}\,2^d\,\treespec(d)
    $$
    where $\treespec:[n]\to\RR$ is given by 
    $$
    \treespec(d):= \begin{cases}
      p^{2d}\left[\dfrac{1-(2p)^{n+1-d}}{1-2p}\right]^2, & \text{ if } p\neq 1/2,\\
      2^{-2d}\,(n+1-d)^2, & \text{ if } p=1/2.
    \end{cases}
    $$
    \begin{proof}
      Fix $n\in\posint$ and $p\in(0,1)$, and let $A\subseteq\bits_n$ be non-empty. To simplify notation, we abbreviate $\<.,.\>_{p,B_n}$ by
      $\<.,.\>$ and set $\Psi_A:=\Psi_A^{p,B_n}$. We begin by finding an expression for the coefficient $\<f_n,\Psi_A\>$.
      For $v\in V_n$, let $\sP_v$ denote the edge set of the unique path in $(V_n,B_n)$ between $o$ and $v$, and let $\delta_v$ be the
      depth of $v$. Then $v\leftrightarrow o$ iff every $e\in \sP_v$ is open. Therefore, for all $x\in S^{\bits_n}$ we have
      \begin{equation}
      f_n(x) = \sum_{v\in V_n}\prod_{e\in \sP_v}\left(\frac{1+x_e}{2}\right)\;.
      \end{equation}
      It follows that, if $X$ denotes a random element of $\bits_n$ with distribution $\pi_{p,\bits_n}$, then 
      \begin{equation}
        \begin{split}
        \<f_n,\Psi_A\> &= \EE\left(\sum_{v\in V_n} \prod_{e\in \sP_v}\left(\frac{1+X_e}{2}\right) \prod_{g\in A} X_g\,\nu_p^{X_g}\right) \\
        &= \sum_{v\in V_n}
        \prod_{e\in \sP_v\setminus A} \EE\left(\frac{1+X_e}{2}\right)
        \prod_{g\in A\setminus \sP_v} \EE(X_{g}\,\nu_p^{X_g}) \\
        &\qquad\times\prod_{h\in A\cap \sP_v} \left(\frac{\EE(X_h\,\nu_p^{X_h})}{2} + \frac{\EE(X_h^2\,\nu_p^{X_h})}{2}\right) \\
        &= \sum_{v\in V_n}\,p^{|\sP_v\setminus A|}\,(\sqrt{p(1-p)})^{|A\cap\sP_v|}\,\indicator(A\setminus\sP_v=\emptyset)\\
        &= \nu_p^{|A|}\,\sum_{v\in V_n}\indicator(\sP_v\supseteq A)\,p^{\delta_v}\;,
        \end{split}
        \label{eq:tree coefficient, intermediate}
      \end{equation}
      where we used the fact that for any $e\in B_n$ we have
      \begin{equation}
        \begin{split}
        \EE_p(X_e\, \nu_p^{X_e}) &= 0\;, \\
        \EE_p(X_e^2\,\nu_p^{X_e})&= 2\sqrt{p(1-p)}\;.
        \end{split}
        \end{equation}
      
      Now let
      \begin{equation}
      \sS_n:=\{A'\subseteq\bits_n \,:\, \exists v\in V_n \text{ with } \sP_v\supseteq A'\}\;,
      \end{equation}
      and
      \begin{equation}
      d_A:=\max\{\delta_v \,:\, v\in V_n \text{ and } \exists u\in V_n \text{ with } A\ni uv\}\;. 
      \end{equation}
      If $A\not\in\sS_n$, then equation~\eqref{eq:tree coefficient, intermediate}
      implies $\<f_n,\Psi_A\>=0$. Assume instead that $A\in\sS_n$, and let $v_A\in V_n$ be the (unique) deepest endpoint of the edges in
      $A$. Then $\sP_w\supseteq A$ for $w\in V_n$ iff $\sP_w\supseteq \sP_{v_A}$. For each $d_A+1\le k \le n$, there are $2^{k-d_A}$
      descendants of $v_A$ at depth $k$. Therefore, equation~\eqref{eq:tree coefficient, intermediate} implies
      \begin{equation}
        \begin{split}
          \<f_n,\Psi_A\> &= \indicator(A\in\sS_n)\,\nu_p^{|A|}\,p^{d_A}\,\sum_{k=0}^{n-d_A} (2p)^{k}\\
          &=\indicator(A\in\sS_n)\,
          \begin{cases}
            \nu_p^{|A|}\,p^{d_A}\dfrac{1-(2p)^{n-d_A+1}}{1-2p}, & p\neq 1/2\;,\\
            2^{-d_A}(n-d_A+1), & p=1/2\;.
          \end{cases}
          \end{split}
        \label{eq:tree coefficient, final}
      \end{equation}
      It then follows from equation~\eqref{eq:tree coefficient, final} that
      \begin{equation}
        \<f_n,\Psi_A\>^2 = \nu_p^{2|A|}\,\treespec(d_A)\,\indicator(A\in\sS_n)\;.
        \label{eq:squared tree coefficient}
      \end{equation}

      Finally, from equation~\eqref{eq:squared tree coefficient} we conclude that for any $k\in[\bits_n]$
      \begin{align}
        \sum_{A\in \binom{B_n}{k}}\,\<f_n,\Psi_A\>^2 &= \sum_{d=1}^n\,\sum_{\substack{v\in V_n\\\delta_v=d}}
        \,\sum_{\substack{A\in\sS_n\\|A|=k,\, v_A=v}}\,\nu_p^{2k}\,\treespec(d)\\
        &= \nu_p^{2k}\,\sum_{d=1}^n\,2^d\,\binom{d-1}{k-1}\,\treespec(d)\;.
      \end{align}
      The stated result now follows from equation~\eqref{eq:energy sample definition}.
    \end{proof}
  \end{lemma}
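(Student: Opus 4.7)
The plan is to start from the spectral identity
\begin{equation*}
\var_p(f_n)\,\PP(\sW_{p,n}=k) = \sum_{A \in \binom{B_n}{k}} \<f_n,\Psi_A^{p,B_n}\>_{p,B_n}^{\,2}
\end{equation*}
coming from equation~(\ref{eq:energy sample definition}), and to compute the Fourier coefficients $\<f_n,\Psi_A^{p,B_n}\>_{p,B_n}$ explicitly by exploiting the tree structure. The key decomposition is that $f_n$ splits linearly over vertices: for each $v \in V_n$, the event $\{v \leftrightarrow o\}$ is precisely the event that every edge on the unique root-to-$v$ path $\sP_v$ is open, so $f_n(x) = \sum_{v \in V_n} \prod_{e \in \sP_v}(1+x_e)/2$.

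Substituting this into the inner product and interchanging sum with expectation, the expectation factorises across independent bits. For each $v$, each bit $e$ sits in one of three classes: $A \cap \sP_v$, $\sP_v \setminus A$, or $A \setminus \sP_v$. The elementary Bernoulli identities $\EE(X_e \nu_p^{X_e}) = 0$, $\EE(X_e^2 \nu_p^{X_e}) = 2\sqrt{p(1-p)}$, and $\EE((1+X_e)/2) = p$ then show that any bit of the third class contributes a factor $0$, so the contribution of $v$ vanishes unless $A \subseteq \sP_v$; when this holds, the contribution is $\nu_p^{|A|}p^{\delta_v}$, using $\sqrt{p(1-p)} = p\,\nu_p$. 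Hence only $A$ that are contained in some root-to-leaf path contribute. For such $A$, let $v_A$ denote the unique deepest endpoint of edges of $A$, and $d_A$ its depth; then $A \subseteq \sP_v$ iff $v$ is a descendant of $v_A$ (including $v_A$ itself). Summing $\nu_p^{|A|}p^{\delta_v}$ over all such $v$ in the perfect binary tree, using that $v_A$ has $2^{j-d_A}$ descendants at depth $j$, gives $\nu_p^{|A|}\,p^{d_A}\sum_{j=0}^{n-d_A}(2p)^j$. Squaring, and splitting into the cases $p \neq 1/2$ (geometric series) and $p = 1/2$ (sum equals $n - d_A + 1$), produces exactly $\nu_p^{2|A|}\,\treespec(d_A)$.

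Finally I would aggregate $\sum_{A \in \binom{B_n}{k}}\<f_n,\Psi_A^{p,B_n}\>_{p,B_n}^{\,2}$ by grouping sets $A$ of size $k$ by their deepest endpoint $v_A$. At depth $d$ the perfect binary tree has $2^d$ vertices, and for each such $v_A$ the sets $A \subseteq \sP_{v_A}$ of size $k$ that contain the edge incident to $v_A$ number $\binom{d-1}{k-1}$. Multiplying these combinatorial factors with the squared coefficient and summing over $d$ then yields the stated formula. I expect the main obstacle to be the bookkeeping in the three-way factorisation over bits, and spotting that $\EE(X_e\nu_p^{X_e}) = 0$ is what forces $A \subseteq \sP_v$; once that observation is in hand, the remainder of the argument is direct computation.
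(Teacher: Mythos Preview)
Your proposal is correct and follows essentially the same approach as the paper: the same decomposition $f_n(x)=\sum_{v}\prod_{e\in\sP_v}(1+x_e)/2$, the same three-way factorisation of bits with the key observation $\EE(X_e\nu_p^{X_e})=0$ forcing $A\subseteq\sP_v$, the same summation over descendants of $v_A$ yielding the geometric series in $2p$, and the same final grouping by the deepest endpoint giving the factors $2^d\binom{d-1}{k-1}$.
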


Armed with lemma~\ref{lem:tree energy distribution}, we can now prove proposition~\ref{prop:trees}.
\begin{proof}[Proof of proposition \ref{prop:trees}.]
  Asymptotic statements here refer to the limit $n\to\infty$ with $p$ fixed.  We begin by proving part~\ref{prop_part:tree off-critical
    rho}. To that end, let $z\in[0,1]$.  From lemma~\ref{lem:tree energy distribution} we
  have for any $p\in(0,1)$ that
  \begin{equation}
    \begin{split}
      \var_p(f_n)\,\EE(z^{\sW_{p,n}})&= \sum_{k\in[B_n]}\,z^k\,\nu_p^{2k}\,\sum_{d=1}^n\,2^d\,\binom{d-1}{k-1}\,\treespec(d)\\
      &= \frac{z\,\nu_p^2}{1+z\,\nu_p^2}\,\sum_{d=1}^n\,2^d\,(1+z\,\nu_p^2)^d\,\treespec(d)\;.
      \label{eq:tree MGF general}
      \end{split}
  \end{equation}
  For $p\neq 1/2$ it then follows that
  \begin{equation}
  \var_p(f_n)\,\EE(z^\sW)= \frac{z\,\nu_p^2}{(1+z\,\nu_p^2)(1-2p)^2}\,\sum_{d=1}^n\,[2p^2(1+z\,\nu_p^2)]^d\,[1-(2p)^{n+1-d}]^2\;.
  \end{equation}
  Now suppose $z_n\in[0,1]$ is a convergent sequence, and set $z_\infty:=\lim_{n\to\infty}z_n$. If $p\in(0,1/2)$,
  so that $(2p)^n$ is exponentially decaying for large $n$, we have
  \begin{equation}
    \lim_{n\to\infty}\,\var_p(f_n)\,\EE(z_{n}^{\sW_{p,n}})
    = \frac{z_\infty\,\nu_p^2}{(1+z_\infty\,\nu_p^2)(1-2p)^2}\frac{2p^2(1+z_\infty\,\nu_p^2)}{[1-2p^2(1+z_\infty\,\nu_p^2)]}\;,
    \label{eq:tree subcritical MGF}
  \end{equation}
  while if $p\in(1/2,1)$, so that $(2p)^n$ is exponentially increasing in $n$, we have
  \begin{equation}
    \lim_{n\to\infty}\,(2p)^{-2n}\,\var_p(f_n)\,\EE(z_n^{\sW_{p,n}})
    = \frac{z_\infty\,\nu_p^2}{(1+z_\infty\,\nu_p^2)(2p-1)^2}\frac{4p^2(1+z_\infty\,\nu_p^2)}{[1-z_\infty\,\nu_p^2]}\;.
    \label{eq:tree supercritical MGF}
  \end{equation}
  In particular, taking $z_n=1$ and employing the identity $1+\nu_p^2=1/p$ yields
  \begin{equation}
    \var_p(f_n)\sim
    \begin{cases}
      \dfrac{2p(1-p)}{(1-2p)^3}, & p\in(0,1/2),\vspace{2mm}\\

      \dfrac{4p^2(1-p)}{(2p-1)^3}\,(2p)^{2n}, & p\in(1/2,1).
    \end{cases}
    \label{eq:tree off-critical variance}
  \end{equation}
  Taking instead $z_n=\e^{-t_n}$ implies $z_\infty=1$ when $t_n=o(1)$, and $z_\infty=0$ when $t_n=\omega(1)$, and so equations~\eqref{eq:tree
    subcritical MGF}, \eqref{eq:tree supercritical MGF} and~\eqref{eq:tree off-critical variance} imply that for $p\neq1/2$ we have
  \begin{equation}
  \lim_{n\to\infty}\,\EE(\e^{-t_n\sW_{p,n}}) =
  \begin{cases}
    1, & t_n=o(1),\\
    0, & t_n=\omega(1).
  \end{cases}
  \label{eq:tree off-critical rhocont}
  \end{equation}

  Now let $s_n$ be a positive integer sequence, and set $t_n=s_n/|B_n|$. If $s_n=\omega(|B_n|)$ then combining equation~\eqref{eq:tree
    off-critical rhocont} with lemmas~\ref{lem:discrete time vs continuous time rho asymptotics} and~\ref{lem:spectral autocorrelation forms}
  implies
  \begin{equation}
    \lim_{n\to\infty}\,\rho_{p,f_n}(s_n) = 0\;.
  \end{equation}
  Suppose instead that $s_n=o(|B_n|)$. If $s_n=\omega(1)$, 
  then combining equation~\eqref{eq:tree
    off-critical rhocont} with lemmas~\ref{lem:discrete time vs continuous time rho asymptotics} and~\ref{lem:spectral autocorrelation forms}
  implies
  \begin{equation}
    \lim_{n\to\infty}\,\rho_{p,f_n}(s_n) = 1\;.
    \label{eq:tree rho small divergent times}
  \end{equation}
  If $s_n$ is instead bounded, then we can bound it above by $s_n'$ satisfying $s_n'=\omega(1)$ and $s_n'=o(|B_n|)$. By then applying
  equation~\eqref{eq:tree rho small divergent times} with $s_n$ replaced by $s_n'$, together with the fact that $\rho_{p,f_n}(s)$ is
  decreasing in $s$, and bounded above by 1, we see that in fact equation~\eqref{eq:tree rho small divergent times} also holds for bounded
  $s_n$.  We have therefore established part~\ref{prop_part:tree off-critical rho}.

  We now consider part~\ref{prop_part:tree critical rho}. From equation~\eqref{eq:tree MGF general} we have for $z\in[0,1]$ that
  \begin{equation}
    \var_{1/2}(f_n)\,\EE(z^{\sW_{1/2,n}}) = \frac{z}{2}\,\sum_{j=1}^n\,j^2\,\left(\frac{1+z}{2}\right)^{n-j}\;.
    \label{eq:tree critical MGF setup}
  \end{equation}
  Taking $z=1$ in equation~\eqref{eq:tree critical MGF setup} immediately yields
  \begin{equation}
  \var_{1/2}(f_n)=\frac{n(n+1)(2n+1)}{12}\;,
    \label{eq:tree critical variance}
  \end{equation}
  and it then follows that for $z\in[0,1)$
    \begin{multline}
      \EE(z^{\sW_{1/2,n}}) = \frac{12}{(1+n^{-1})(2+n^{-1})}\left(\frac{z}{n(1-z)} -2\frac{z(1+z)}{n^2(1-z)^2}\right.\\
      \left.+ \frac{z(1+z)(3+z)}{n^3(1-z)^3} \left[1-\left(\frac{1+z}{2}\right)^n\right]\right)\;.
    \label{eq:tree critical MGF}
    \end{multline}
    In particular, if $z=\e^{-t_n}$ with $t_n>0$ and $t_n\to c\in(0,+\infty]$, then
  \begin{equation}
    \lim_{n\to\infty} \EE(\e^{-t_n\sW_{1/2,n}}) = 0\;.
  \end{equation}

  Suppose instead that $t:=t_n=o(1)$. It then follows from equation~\eqref{eq:tree critical MGF} that 
     \begin{align}
       \EE(\e^{-t \sW_{1/2,n}}) &= \frac{12}{(1+n^{-1})(2+n^{-1})}
       \left[\frac{t}{(\e^t-1)}\frac{1}{n t} -2\frac{t^2(\e^t+1)}{(\e^t-1)^2}\frac{1}{(n t)^2}\right.\nonumber
       \\
       &\left.\quad + \frac{t^3 (\e^t+1)(3\e^t+1)}{(\e^t-1)^3}\frac{1}{(n t)^3}\left[1-\left(\frac{1+\e^{-t}}{2}\right)^n\right]\right]\\
       &= \frac{12}{(1+n^{-1})(2+n^{-1})}\left(
       \frac{1}{n t}[1+O(t)]-2\frac{1}{(n t)^2}[2-t+O(t^2)]\right.\nonumber\\
       &\left.\quad+\frac{1}{(n t)^3}[8-2t+O(t^2)]\left[1-\left(\frac{1+\e^{-t}}{2}\right)^n\right]
       \right)\;,
       \label{eq:tree critical MGF small t}
     \end{align}
     where in performing the above Taylor approximation it is helpful to recall that $t/(\e^t-1)$ is the exponential generating function of
     the Bernoulli numbers (see e.g.~\cite{GrahamKnuthPatashnik1994}). If $t=\omega(n^{-1})$ it now follows immediately
     from equation~\eqref{eq:tree critical MGF small t} that 
     \begin{equation}
     \lim_{n\to\infty} \EE(\e^{-t_n\,\sW_{1/2,n}}) = 0\;.
     \end{equation}
     Conversely, now suppose that $t=o(n^{-1})$. Then
      \begin{equation}
        1-\left(\frac{1+\e^{-t}}{2}\right)^n = \frac{nt}{2}-\frac{(nt)^2}{8}-\frac{nt^2}{8}+\frac{(nt)^3}{48} + O[n^2t^3 \vee (nt)^4]\;, 
        \label{eq:tree critical MGF exponential expansion}
      \end{equation}
      and so the last term in equation~\eqref{eq:tree critical MGF small t} satisfies, as $n\to\infty$,
     \begin{equation}
       \frac{1}{(n t)^3}[8-2t+O(t^2)]\left[1-\left(\frac{1+\e^{-t}}{2}\right)^n\right]
          =
          \frac{4}{(nt)^2}-\frac{1}{nt}-\frac{2}{n^2 t}+ \frac{1}{6} + o(1)\;.
          \label{eq:tree critical MGF small t last term}
     \end{equation}
      Combining equation~\eqref{eq:tree critical MGF small t last term} with equation~\eqref{eq:tree critical MGF small t} then yields
      \begin{equation}
        \lim_{n\to\infty} \EE(\e^{-t\sW_{1/2,n}})=1\;,
      \end{equation}
      We have therefore established that
      \begin{equation}
        \lim_{n\to\infty}\,\EE(\e^{-t_n\sW}) =
        \begin{cases}
          1, & t_n=o(n^{-1})\\
          0, & t_n=\omega(n^{-1}).
        \end{cases}
        \label{eq:tree critical rhocont}
      \end{equation}
      Just as part~\ref{prop_part:tree off-critical rho} was shown above to follow from~\eqref{eq:tree off-critical rhocont}, a
      precisely analogous argument shows that part~\ref{prop_part:tree critical rho} follows from~\eqref{eq:tree critical rhocont}.

      Finally, we turn attention to part~\ref{prop_part:tree tau}. From lemma~\ref{lem:tree energy distribution} we have for any $p\in(0,1)$
      that
  \begin{align}
    \var_p(f_n) \,\EE(\sW_{p,n}^{-1}) &= \sum_{d=1}^n\,2^d\,g_p(d)\,\sum_{k=1}^d\,\binom{d-1}{k-1}\frac{\nu_p^{2k}}{k}\\
    &=  \sum_{d=1}^n\,2^d\,g_p(d)\,\left(\frac{p^{-d}-1}{d}\right)\;.
    \label{eq:tree autocorrelation time general}
  \end{align}
  Suppose $p\neq1/2$. It follows from equation~\eqref{eq:tree autocorrelation time general} that
  \begin{multline}
    (1-2p)^2\,\var_p(f_n)\,\EE(\sW_{p,n}^{-1}) \\
    = \sum_{d=1}^n\,\frac{1}{d}\left[(2p)^d+(2p)^{2n+2-d}-2(2p)^{n+1}-(2p^2)^d-(2p)^{2n+2}2^{-d}+2(2p)^{n+1}p^d\right]\;.
    \label{eq:tree noncritical autocorrelation time}
  \end{multline}
  Now observe that for any $x\in(0,1)$ we have
  \begin{equation}
    \sum_{d=1}^\infty\frac{x^d}{d}=-\log(1-x)\;.
    \label{eq:tree log sum}
  \end{equation}
  Equations~\eqref{eq:tree noncritical autocorrelation time} and~\eqref{eq:tree log sum} then imply that for $p\in(0,1/2)$ we have
  \begin{equation}
    \lim_{n\to\infty}(1-2p)^2\,\var_p(f_n)\,\EE(\sW_{p,n}^{-1}) = \log(1-2p^2)-\log(1-2p)\;,
    \label{eq:tree sub-critical var tau}
  \end{equation}
  while for $p\in(1/2,1)$ we have
  \begin{equation}
    \lim_{n\to\infty}(2p)^{-2n-2}(1-2p)^2\,\var_p(f_n)\,\EE(\sW_{p,n}^{-1}) = \log\left(\frac{2p}{2p-1}\right) -\log(2)\;.
    \label{eq:tree super-critical var tau}    
  \end{equation}

  In the critical case, equation~\eqref{eq:tree autocorrelation time general} yields
  \begin{align}
    \var_{1/2}(f_n)\,\EE(\sW_{1/2,n}^{-1}) &= \sum_{d=1}^n\frac{(n+1-d)^2}{d}(1-2^{-d}) \\
    &= n^2\log(n) + O(n^2)\;.
    \label{eq:tree critical var tau}
  \end{align}
  Part~\ref{prop_part:tree tau} then follows by combining equations~\eqref{eq:tree off-critical variance}, \eqref{eq:tree sub-critical var
    tau}, and \eqref{eq:tree super-critical var tau}, and equations \eqref{eq:tree critical variance} and~\eqref{eq:tree critical var tau},
  with part~\ref{lem_part:discrete tau in terms of W} of lemma~\ref{lem:spectral autocorrelation forms}.
\end{proof}

\section{High-dimensional tori}
\label{sec:high dimensions}
In this section we will prove proposition~\ref{prop:high-dimensional tori}. To that end, let $d,L\in\posint$ with $L\ge3$.  For $p\in(0,1)$,
we consider discrete-time dynamical percolation on $\torus$. Let $B_L:=E(\torus)$. We denote expectation and variance with respect to
$\pi_{p,B_L}$ by $\EE_{p}$ and $\var_{p}$. Let $C_L$ denote the cluster containing the origin, so that $f_{d,L}=|C_L|$.  To simplify
notation we set $\sW_{p,L}:=\sW_{p,f_{d,L}}$, $\tau_{p,L}:=\tau_{p,f_{d,L}}$, $\contrho_{p,L}:=\contrho_{p,f_{d,L}}$ and
$\rho_{p,L}:=\rho_{p,f_{d,L}}$, and for all $A\subseteq B_L$ we set $\Psi_A:=\Psi_A^{p,B_L}$.  To avoid conflict with the subscripts
required when applying theorem~\ref{thm:revealment-predictability theorem}, we shall also henceforth abbreviate $f:=f_{d,L}$.

It will be helpful on occasion to relate bond percolation on $\torus$ to bond percolation on $\ZZ^d$. We shall denote the probability
measure and expectation with respect to the latter by $\PP_{p,\ZZ^d}$ and $\EE_{p,\ZZ^d}$. We view $\ZZ^d$ as a graph in which vertices of
Euclidean distance 1 are adjacent, and we make no notational distinction between this graph and its vertex set. For bond percolation on
$\ZZ^d$ we shall denote the cluster containing the origin by $C_{\ZZ^d}$.
\begin{proof}[Proof of proposition~\ref{prop:high-dimensional tori}]
  Asymptotic statements here refer to the limit $L\to\infty$, for fixed values of $p$ and $d$, and implied constants may in general
  depend on $p$ and $d$.  We first consider the subcritical case, and therefore fix $p\in(0,\pc)$. Let $e\in B_L$ denote the edge from the
  origin to the unit vector $(1,0,\ldots,0)$ on the positive $x$ axis. Then from equation~\eqref{eq:energy sample definition} we have
  \begin{equation}
    \EE(\sW_{p,L}^{-1}) = \sum_{k\in[B_L]}\,\frac{1}{k}\,\sum_{A\in\binom{B_L}{k}}\,\frac{\<f,\Psi_A\>^2}{\var_p(f)}
    \ge \frac{\<f,\Psi_{e}\>^2}{\var_p(f)}\;,
    \label{eq:tori subcritical tau setup}
  \end{equation}
  and for any $t\ge0$
  \begin{equation}
    \EE(\e^{-t\,\sW_{p,L}}) = \sum_{k\in[B_L]}\,\e^{-t\,k}\,\sum_{A\in\binom{B_L}{k}}\,\frac{\<f,\Psi_A\>^2}{\var_p(f)}
    \ge \frac{\<f,\Psi_{e}\>^2}{\var_p(f)}\,\e^{-t}\;.
    \label{eq:tori subcritical rho setup}
  \end{equation}

  We begin by lower bounding $\<f,\Psi_{e}\>^2$. Employing the notation of equation~\eqref{eq:oplus definition}, with $I=\{e\}$, we have
  \begin{align}
    \<f,\Psi_e\> &= \sum_{\omega\in S^{B_L\setminus e}}\,\pi_{p,B_L\setminus e}(\omega)\,\sum_{a\in S^e}
    \pi_{p}(a)\,a\,\nu_p^a\,f(a\oplus\omega)\\
    &= \sum_{\omega\in S^{B_L\setminus e}}\,\pi_{p,B_L\setminus e}(\omega)\,\sum_{a\in S^e}
    \sqrt{p(1-p)}\, a\, \,f(a\oplus\omega)\\
    &= \sqrt{p(1-p)}\sum_{\omega\in S^{B_L\setminus e}}\,\pi_{p,B_L\setminus e}(\omega)\,\left[f(\omega^e)-f(\omega_e)\right]
  \end{align}
  where for $\omega\in S^{B_L\setminus e}$ we define $\omega^e,\omega_e\in S^{B_L}$ so that $\omega^e(e)=+1$ and $\omega_e(e)=-1$, while
  $\omega^e(e')=\omega_e(e')=\omega(e')$ for all $e'\in B_L\setminus e$.

  Now define
  \begin{equation}
    \sN:=\{\omega\in S^{B_L\setminus e}\;:\;\omega(e')=-1 \text{ for all } e'\in E\setminus e \text{ such that $0$ is an endpoint of
      $e'$}\}\;.
  \end{equation}
  Observe that for all $\omega\in S^{B_L\setminus e}$ we have $f(\omega^e)\ge f(\omega_e)$, while if $\omega\in\sN$ we have
  $f(\omega^e)\ge2$ and $f(\omega_e)=1$. Also observe that $\pi_{p,B_L}(\sN)=(1-p)^{2d-1}$. It follows that
  \begin{equation}
    \<f,\Psi_e\>\ge \sqrt{p(1-p)}\,(1-p)^{2d-1}\;.
    \label{eq:tori subcritical fourier bound}
  \end{equation}
  
  We can also find a constant order upper bound for $\var_p(f)$, as follows. We begin with
  \begin{equation}
    \var_p(f)\le \EE_{p}(|C_L|^2)\;.
    \label{eq:tori critical variance bounded by second moment}
  \end{equation}
  But it follows from~\cite[proposition 2.1]{HeydenreichVanDerHofstad2007} that
  \begin{equation}
    \EE_{p}(|C_L|^2) \le \EE_{p,\ZZ^d}(|C_{\ZZ^d}|^2)\;,
    \label{eq:coupling torus to infinite lattice}
  \end{equation}
  and the behaviour of $|C_{\ZZ^d}|$ when $p<\pc$ is well understood. In particular, it is known (see e.g.~\cite[theorem 6.75]{Grimmett1999}) that
  for all $p<\pc$, there exists $\lambda_p>0$ such that for all $n\ge1$
  \begin{equation}
    \PP_{p,\ZZ^d}(|C_{\ZZ^d}|\ge n)\le \e^{-\lambda_p n}\;.
  \end{equation}
  Consequently,
  \begin{equation}
    \EE_{p,\ZZ^d}(|C_{\ZZ^d}|^2)\le \sum_{n=1}^\infty n^2\,\e^{-\lambda_p n} \le
    \frac{\e^{\lambda_p}(\e^{\lambda_p}+1)}{(\e^{\lambda_p}-1)^3}<\infty\;.
    \label{eq:tori subcritical variance bound}
  \end{equation}
  It then follows from equations~\eqref{eq:tori subcritical fourier bound}, \eqref{eq:tori critical variance bounded by second moment},
  \eqref{eq:coupling torus to infinite lattice} and~\eqref{eq:tori subcritical variance bound} that for all $0<p<\pc$ we have
  \begin{equation}
    \frac{\<f,\Psi_{e}\>^2}{\var_p(f)} \ge p(1-p)^{4d-1}\,\frac{(\e^{\lambda_p}-1)^3}{\e^{\lambda_p}(\e^{\lambda_p}+1)} > 0\;.
    \label{eq:subcritical torus fourier/variance bound}
  \end{equation}

  Combining equations~\eqref{eq:tori subcritical tau setup} and~\eqref{eq:subcritical torus fourier/variance bound} with
  lemma~\ref{lem:spectral autocorrelation forms} then implies that $\tau_{p,L}=\Omega(|B_L|)$, so we conclude from remark~\ref{rem:no
    slowing down} that in fact $\tau_{p,L}=\Theta(|B_L|)$. Similarly, if $s_L$ is an integer sequence with $s_L=o(|B_L|)$, then combining
  equations~\eqref{eq:tori subcritical rho setup} and~\eqref{eq:subcritical torus fourier/variance bound} with lemma~\ref{lem:spectral
    autocorrelation forms} implies $\contrho_{p,L}(s_L/|B_L|)=\Omega(1)$. For $s_L$ satisfying $s_L=o(|B_L|)$ and $s_L=\omega(1)$ we can
  then conclude from lemma~\ref{lem:discrete time vs continuous time rho asymptotics} that $\rho_{p,L}(s_L)=\Omega(1)$, and monotonicity
  then implies that in fact $\rho_{p,L}(s_L)=\Omega(1)$ for any $s_L=o(|B_L|)$. But $\rho_{p,L}(s)\le 1$ for any $s\in\NN$, and so in fact
  $\rho_{p,L}(s_L)=\Theta(1)$.

  We now turn attention to the critical case of $p=\pc$. Our strategy is to describe a distribution on $\sQ_{B_L}$, and then apply
  theorem~\ref{thm:revealment-predictability theorem}. For concreteness, we impose the lexicographic ordering on the vertices of
  $\ZZ^d$. For $r\in\posint$, we denote the cube in $\ZZ^d$ of radius $r$ centred at $v\in\ZZ^d$ by
  \begin{equation}
    \subbox_r(v):=\{y\in\ZZ^d:\|y-v\|_{\infty}\le r\}\;,
  \end{equation}
  where $\|\cdot\|_\infty$ denotes the sup norm on $\RR^d$. We abbreviate $\subbox_r:=\subbox_r(0)$. We fix $\kappa\in(0,1)$, and define the
  sequences $\aL:=\floor{L^\kappa}$ and $\bL:=\floor{L^{\kappa/3}}$. For $r\in[\aL]$, we consider the subgraph of $\ZZ^d$ induced by
  $\subbox_r$, and let $E_r$ denote its edge set. In what follows, we will assume $L$ is sufficiently large that $a<L/2$,
  which then guarantees $\subbox_r$ is a proper subset of $V(\torus)$.

  We construct a query tree as follows. Let $r\in[\aL]$ and $x\in S^{B_L}$, and consider the subgraph $\sG_r$ of $\subbox_r$ corresponding
  to $x_{E_r}\in S^{E_r}$. Beginning at the smallest vertex on the boundary $\partial\subbox_r$, perform breadth-first search on
  $\sG_r$. Naive breadth-first search would in principle require as input the full adjacency list of $\sG_r$, which would require querying
  in advance each edge in $E_r$. We instead query just the necessary edges on the fly as follows. From vertex $v$, query each edge $uv\in
  E_r$ which has not been previously queried, then mark $uv$ as queried. If $x_{uv}=1$, add $u$ to the adjacency list of $v$ and vice
  versa. Once the breadth-first search from a given point in $\partial\subbox_r$ has terminated, start a new breadth-first search from the
  next smallest vertex in $\partial\subbox_r$ which has not previously been visited by a breadth-first search. When this process terminates,
  the set of edges which have been queried consists precisely of those edges in $E_r$ which have at least one endpoint belonging to a
  component in $\sG_r$ that is connected to $\partial\subbox_r$. Let $J_1$ denote the set of these edges.

  At the conclusion of the above process it will be known whether or not $0\leftrightarrow\partial\subbox_r$; in particular,
  $0\leftrightarrow\partial\subbox_r$ holds iff $J_1$ contains an edge incident to the origin which is open.  If
  $0\leftrightarrow\partial\subbox_r$, then the above breadth-first search procedure is recommenced from the vertices on $\partial\subbox_r$
  which are connected to $0$, but now with respect to all edges in $\torus$. Therefore, if $0\leftrightarrow\partial\subbox_r$, then the
  component of $x$ containing $0$ will be determined exactly. The set of edges queried in this second step is precisely all those edges in
  $B_L\setminus E_r$ which have at least one endpoint in the component of $x$ containing $0$. We denote this set by $J_2$.  If
  $0\not\leftrightarrow\partial\subbox_r$, then $J_2=\emptyset$.  The total set of edges queried in the above two-stage process is therefore
  $J=J_1\cup J_2$. Note that each edge in $B_L$ is queried at most once. For any fixed $r\in[\aL]$, the above procedure defines a
  deterministic query tree on $B_L$, which we denote by $t_r$.
  
  Now let $\sR$ be a uniformly random element of $[\aL]$, and let $\sT:=t_\sR$. Denote the distribution of $\sT$ by $\mu$, and let $X$ be
  independent of $\sT$ with distribution $\pi:=\pi_{\pc,B_L}$. We will bound the predictability and revealment corresponding to $\mu$ and
  $\pi$. We begin by considering the predictability. By construction, on the event $0\leftrightarrow\partial\subbox_\sR$, the set of queried
  bits, $J$, is sufficiently large that $f_{J|X_J}$ is a constant function of the bits in $B_L\setminus J$, so that
  $\var_\pi(f_{J|X_J})=0$. While on $0\not\leftrightarrow\partial\subbox_\sR$, the set of queried bits, $J=J_1$, contains a cut-set of
  closed edges in $E_\sR$ which separates the union of the components connected to $\partial\subbox_\sR$, from a set $V_0\subset\subbox_\sR$
  containing the origin. Therefore, on $0\not\leftrightarrow\partial\subbox_\sR$, regardless of the values taken by $X$ on $B_L\setminus J$
  we have $f_{J|X_J} <|\subbox_\sR|=\sR^d$, and so $\var_{\pi}(f_{J|X_J})\le\sR^{2d}$. It follows that
  \begin{align}
    \EE\left(\var_\pi(f_{J|X_J})\right) &=
    \EE\left(\var_\pi(f_{J|X_J})\indicator(0\not\leftrightarrow\partial\subbox_\sR)\right)\\
    &\le \sum_{r\in[\aL]}\,\EE
    \left(r^{2d}\,\indicator(0\not\leftrightarrow\partial\subbox_\sR)\indicator(\sR=r)\right)\\
    &\le
    \sum_{r\in[\aL]}\,r^{2d}\,\PP(\sR=r)\\
    &\le \frac{1}{\aL}\int_{0}^{\aL+1} t^{2d}\, \d t \\
    &\ll L^{2\,d\, \kappa}\;.
    \label{eq:critical torus predictability numerator bound}
  \end{align}

  To lower bound $\var(f(X))$, we begin by noting that Chebyshev's inequality implies that
  \begin{equation}
    \var(f(X))\ge \chi^2\,\PP[f(X)\ge 2\,\chi]\;,
    \label{eq:critical torus Chebyshev bound}
  \end{equation}
  where $\chi:=\EE(f(X))$. But~\cite[corollary 2.2]{HeydenreichVanDerHofstad2011}, which builds upon results
  in~\cite{BorgsChayesVanDerHofstadSladeSpencerI2005,BorgsChayesVanDerHofstadSladeSpencerII2005}, implies that for sufficiently large $d$ we
  have
  \begin{equation}
    \chi\gg L^{d/3}\;,
    \label{eq:critical torus chi bound}
  \end{equation}
  and
  \begin{equation}
    \PP[f(X)\ge 2\,\chi] \gg L^{-d/6}\;.
    \label{eq:critical torus tail bound}
  \end{equation}
  Combining equations~\eqref{eq:critical torus chi bound} and~\eqref{eq:critical torus tail bound} with equation~\eqref{eq:critical torus
    Chebyshev bound} then yields
  \begin{equation}
    \var(f(X))\gg L^{d/2}\;.
    \label{eq:critical torus variance bound}
  \end{equation}
  We therefore conclude from equations~\eqref{eq:critical torus predictability numerator bound} and~\eqref{eq:critical torus variance bound}
  that
  \begin{equation}
    \epsilon_{\mu,\pi,f} \ll L^{2d\kappa-d/2}\;.
    \label{eq:critical torus predictability bound}    
  \end{equation}
  Taking $\kappa<1/4$ implies that $\epsilon_{\mu,\pi,f}=o(1)$.
  
  We now bound the revealment, and begin by considering $J_1$. For $u\in\subbox_{\aL}$, define
  \begin{align}
    \sA_u &:= \{r\in[\aL]: \|u\|_\infty \le r < \|u\|_\infty + \bL\}\;,
  \end{align}
  and write $\bar{\sA_u}:=[a]\setminus\sA_u$. By construction, $\PP(J_1\ni uv)=0$ for all $uv\in E(\torus)\setminus E_{\aL}$, and so we
  instead suppose that $uv\in E_{\aL}$. Then
  \begin{align}
    \PP(J_1\ni uv) &= \PP(J_1\ni uv,\sR\in \sA_u\cup\sA_v) + \PP(J_1\ni uv,\sR\not\in \sA_u\cup\sA_v)\\
    &\le \PP(\sR\in\sA_u) + \PP(\sR\in\sA_v) + \PP(J_1\ni uv,\sR\in\bar{\sA_u},\sR\in\bar{\sA_v})\\
    &\ll L^{-2\kappa/3} + \PP(J_1\ni uv,\sR\in\bar{\sA_u},\sR\in\bar{\sA_v})\;.
    \label{eq:small R J1 bound}
  \end{align}
  But since, by construction, $\sR<\|u\|_\infty$ implies $J_1\not\ni uv$, we have
  \begin{equation}
    \PP(J_1\ni uv,\sR\in\bar{\sA_u},\sR\in\bar{\sA_v}) = \PP(J_1\ni uv, \sR\ge\|u\|_\infty + \bL, \sR\ge\|v\|_\infty + \bL)\;.
  \end{equation}
  And since $J_1\ni uv$ occurs only if at least one of the events $u\leftrightarrow \partial\subbox_\sR$ and $v\leftrightarrow
  \partial\subbox_\sR$ occur, it then follows by the union bound that
  \begin{align}
    \PP(J_1\ni uv,\sR\in\bar{\sA_u},\sR\in\bar{\sA_v})
    &\le2\max_{w\in\subbox_{\aL}}\,\PP(w\leftrightarrow\partial\subbox_\sR,\sR\ge\|w\|_\infty+\bL)\\
    &=2\max_{w\in\mathbb{B}_{\aL}^d}\,\sum_{r=\|w\|_\infty+\bL}^{\aL}\,\PP(w\leftrightarrow\partial\subbox_r)\,\PP(\sR=r)\;.
    \label{eq:critical torus J1 bound for large R}
  \end{align}
  
  Let $w\in\subbox_a$. If $r\ge \|w\|_\infty+\bL$, then $w\in\subbox_{r-\bL}(0)$, and so $\subbox_{\bL}(w)\subseteq\subbox_r(0)$, which
  implies
  \begin{align}
    \PP(w\leftrightarrow \partial\subbox_r(0)) &\le \PP(w\leftrightarrow\partial\subbox_{\bL}(w))\\
    &= \PP(0\leftrightarrow \partial\subbox_{\bL}(0))\\
    &\ll \frac{1}{\bL^2}\;,
    \label{eq:critical torus arm event bound}
  \end{align}
  where the second line follows by translational invariance, and the final step follows from~\cite[theorem 1]{KozmaNachmias2011JAMS}
  provided $d$ is sufficiently large. Applying the bound from equation~\eqref{eq:critical torus arm event bound} to
  equation~\eqref{eq:critical torus J1 bound for large R}, we then conclude from equation~\eqref{eq:small R J1 bound} that for all $uv\in
  B_L$ we have
  \begin{equation}
    \PP(J_1\ni uv)\ll L^{-2\kappa/3}\;.
    \label{eq:critical torus J1 bound}
  \end{equation}

  We now consider $J_2$. An edge can belong to $J_2$ only if it is outside $\subbox_\sR$ and if at least one of its endpoints is connected
  to the origin. It follows that for any $uv\in B_L$ we have
  \begin{align}
    \PP(J_2\ni uv) &\le \PP(\sR\le b) + \PP(J_2\ni uv, \sR>b)\\
    &\ll L^{-2\kappa/3} + \PP(\sR<\|u\|_\infty\vee\|v\|_\infty,\{u\leftrightarrow0\}\cup\{v\leftrightarrow0\}, \sR>b)\\
    &\ll L^{-2\kappa/3} + \PP(b<\sR\le\|u\|_\infty,u\leftrightarrow0)
    +\PP(b<\sR\le \|v\|_\infty,v\leftrightarrow0)\\
    &\ll L^{-2\kappa/3} + \max_{w\in \torus}\,\PP(b<\sR\le \|w\|_\infty, w\leftrightarrow 0)\\
    &\ll L^{-2\kappa/3} + \max_{\substack{w\in \torus\\ \|w\|_\infty>b}}\,\PP(w\leftrightarrow 0)\\
    &\ll L^{-2\kappa/3} + b^{-(d-2)} + L^{-2d/3}
  \end{align}
  where the final step makes use of~\cite[corollary 1.4]{HutchcroftMichtaSlade2023}, and is valid for all sufficiently large $d$. We
  conclude that for all sufficiently large $d$,
  \begin{equation}
    \PP(J_2\ni uv) \ll L^{-2\kappa/3}\;.
    \label{eq:critical torus J2 bound}
  \end{equation}
  Therefore, since $J=J_1\cup J_2$, combining equations~\eqref{eq:critical torus J1 bound} and~\eqref{eq:critical torus J2 bound} we obtain
  \begin{equation}
    \delta_{\mu,\pi} \ll L^{-2\kappa/3}\;.
    \label{eq:critical torus revealment bound}
  \end{equation}

  Finally, combining equations~\eqref{eq:critical torus predictability bound} and~\eqref{eq:critical torus revealment bound} with
  equation~\eqref{eq:revealment-predictability tau bound} of proposition~\ref{prop:revealment-predictability corollary} then implies that
  for any $\xi>0$
  \begin{equation}
    \frac{\tau_{\pc,f}}{L^d} \ll L^{2d\kappa+\xi-d/2} + L^{-\kappa/3}\;.
  \end{equation}
  Choosing $\kappa=3(d-2\xi)/2(6d+1)$ and $\xi=(d-2)/26$ then yields
  \begin{equation}
        \frac{\tau_{\pc,f}}{L^d} \ll L^{-1/13}\;,
  \end{equation}
  as required. Similarly, combining equations~\eqref{eq:critical torus predictability bound} and~\eqref{eq:critical torus revealment bound}
  with equation~\eqref{eq:revealment-predictability rho bound} of proposition~\ref{prop:revealment-predictability corollary}, and choosing
  $\kappa=3d/2(1+6d)$, shows that for all $t>0$ we have
  \begin{equation}
    \rho_{\pc,L}(\ceil{t L^d})\ll \frac{L^{-1/13}}{t} + \left(\frac{L^{-1/13}}{t}\right)^2\;.
  \end{equation}
  We then conclude for any $t=t_L=\omega(L^{-1/13})$ that
  \begin{equation}
    \lim_{L\to\infty} \rho_{\pc,L}(\ceil{t L^d})=0\;.
  \end{equation}
\end{proof}
  
\section{Proof of the revealment-predictability theorem}
\label{sec:revealment predictability proof}
For completeness, we now provide a self-contained proof of theorem~\ref{thm:revealment-predictability theorem}. In fact, with no extra
effort, we consider the slightly more general situation in which $\pi$ is an arbitrary positive product measure on $\cube$. The proof
follows closely the arguments presented in~\cite[section 2]{SchrammSteif2010}.

\begin{proof}[Proof of theorem~\ref{thm:revealment-predictability theorem}]
  Let $\bits$ be a finite set. For each $i\in\bits$, let $\pi_i:S\to(0,1)$ be a positive probability measure on $S$,
and let $\pi$ be the corresponding product measure on $S^B$, defined by
\begin{equation}
\pi(x):=\prod_{i\in\bits}\pi_i(x_i)\;,\qquad \text{ for all } x\in S^B\;.
\end{equation}
Let $\nu_i=\sqrt{\pi_i(-1)/\pi_i(+1)}$, and define $\psi_i:S\to\RR$ so that $\psi_i(s)=s\nu_i^s$ for $s\in S$. 
Then, for each $A\subseteq\bits$ define ${\Psi_A:\cube\to\RR}$ via 
\begin{equation}
    \Psi_A(x) = \prod_{i\in A}\psi_i(x_i), \qquad\text{ for all } x\in\cube\;.
\end{equation}
It can be easily verified that $\{\Psi_A\}_{A\subseteq\bits}$ is an orthonormal basis for $l^2(\pi)$. In what follows, all inner products
and lengths are taken with respect to the inner product on $l^2(\pi)$. 

Let $\mu$ be any probability measure on $\queries$. Let $X$ and $T$ be independent, with $X$ having distribution $\pi$, and $T$ having
distribution $\mu$.  We abbreviate $J(T,X)$ by $J$. Recalling the definitions in
section~\ref{subsec:revealment predictability theorem}, note that if $g,h:\cube\to\RR$, then $\<g_{J|X_J},h_{J|X_J}\>$ is a random variable
on the space $(\queries\times \cube,\mu\times\pi)$. We have the following useful lemma, whose proof we defer until the end of this section.
\begin{lemma}\label{lem:expectation of random inner product}
For any $g,h:\cube\to\RR$ we have $\EE\,[\<g_{J|X_J},h_{J|X_J}\>]=\<g,h\>$.
\end{lemma}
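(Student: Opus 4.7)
The plan is to exploit the defining feature of a query tree: for fixed $t\in\queries$, the set $J(t,x)$ and the values $x_{J(t,x)}$ are determined by inspecting $x$ only on $J(t,x)$ itself. Concretely, if $J(t,x)=I$ and $x_I=y$, then for any $x'\in\cube$ satisfying $x'_I=y$ we also have $J(t,x')=I$. This is immediate from the description of the query path, which only consults the bits at vertices it visits. Consequently, for fixed $t$, the event $\{J(t,X)=I,\,X_I=y\}$ depends on $X$ only through $X_I$.

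Using this, I would show that conditional on the $\sigma$-algebra generated by $(T,J,X_J)$, the remaining bits $X_{\bar J}$ are still distributed according to the product measure $\pi_{\bar J}$, independently of $X_J$. This follows because $\pi=\pi_J\otimes\pi_{\bar J}$, and the conditioning event depends only on $X_J$; so the standard disintegration argument for product measures gives
\begin{equation*}
\PP(X_{\bar J}\in\cdot\mid T,J,X_J)=\pi_{\bar J}(\cdot)\;.
\end{equation*}
In other words, resampling the non-queried bits from $\pi_{\bar J}$ does not change the joint law of $(T,J,X_J,X)$.

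With this in hand, I would compute the random inner product directly. Using $\pi=\pi_J\otimes\pi_{\bar J}$ and summing first over the (free) coordinates in $J$,
\begin{equation*}
\<g_{J|X_J},h_{J|X_J}\>=\sum_{z\in S^{\bar J}}g(X_J\oplus z)\,h(X_J\oplus z)\,\pi_{\bar J}(z)\;,
\end{equation*}
which by the previous paragraph is precisely $\EE[g(X)h(X)\mid T,J,X_J]$. Taking expectations and applying the tower property yields
\begin{equation*}
\EE\bigl[\<g_{J|X_J},h_{J|X_J}\>\bigr]=\EE\bigl[g(X)h(X)\bigr]=\<g,h\>\;,
\end{equation*}
which is the claim.

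The main conceptual step is the measurability/independence observation that $J$ and $X_J$ already encode everything the algorithm has used, so that the unqueried bits retain their unconditional product distribution; once that is set up cleanly, the rest is a one-line manipulation of the explicit form of $g_{J|X_J}$. The only minor care needed is to write the conditioning with respect to the $\sigma$-algebra $\sigma(T,J,X_J)$ precisely, which I would do by partitioning on the countable set of pairs $(t,I,y)\in\queries\times 2^\bits\times S^I$ with $J(t,\cdot)=I$ on $\{X_I=y\}$.
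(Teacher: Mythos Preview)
Your proposal is correct and follows essentially the same approach as the paper: both hinge on the observation that $J(t,x)$ depends only on $x_{J(t,x)}$, and then use the product structure of $\pi$ to identify $\<g_{J|X_J},h_{J|X_J}\>$ with an average over the unqueried bits. The paper carries this out via an explicit summation over triples $(t,I,y)$, whereas you phrase the same computation in the language of conditional expectation and the tower property; your final remark about partitioning on $(t,I,y)$ is exactly the paper's decomposition.
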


Now let $f:\cube\to\RR$. To simplify notation, we set $\delta:=\delta_{\mu,\pi}$ and $\epsilon:=\epsilon_{\mu,\pi,f}$. Our central task is
to establish the following inequality 
\begin{equation}
\label{eq:Schramm-Steif inequality}
\sum_{A\in\binom{\bits}{k}} \<f,\Psi_A\>^2
\le 2\, \EE \left[\var_{\pi}(f_{J|X_J})\right]
+ 
2\,k\, \var(f(X))\, \delta\;.
\end{equation}
First note that both sides of the above inequality are invariant upon replacing $f$ by $f-\<f,\Phi_\emptyset\>$. It therefore suffices to
establish equation~\eqref{eq:Schramm-Steif inequality} under the assumption that $\<f,\Phi_\emptyset\>=0$. Under this assumption we have
$\|f\|^2=\var(f(X))$.

Fix $k\in[\bits]$, and define $g:\cube\to\RR$ via
\begin{equation}
g:= \sum_{A\in\binom{\bits}{k}}\<f,\Psi_A\>\,\Psi_A\;,
    \label{eq:g definition}
\end{equation}
and note that
\begin{equation}
\label{eq:length of g in terms of f}
\<f,g\>=\sum_{A\in\binom{\bits}{k}} \<f,\Psi_A\>^2 = \|g\|^2\;.
\end{equation}
Equation \eqref{eq:Schramm-Steif inequality} holds trivially if $g=0$, so assume $g\neq0$.

From lemma~\ref{lem:expectation of random inner product} we have
\begin{align}
    \|g\|^2 &= \EE\<g_{J|X_J},f_{J|X_J}\>\\
    &= \EE\<g_{J|X_J},\Psi_{\emptyset}\>\<f_{J|X_J},\Psi_{\emptyset}\>
    +\EE
    \<g_{J|X_J},f_{J|X_J}-\<f_{J|X_J},\Psi_{\emptyset}\>\Psi_{\emptyset}\>
   \\
    &\le 
    \EE\<g_{J|X_J},\Psi_{\emptyset}\>\<f_{J|X_J},\Psi_{\emptyset}\>
    +\EE\,
    \|g_{J|X_J}\|\,\|f_{J|X_J}-\<f_{J|X_J},
    \Psi_{\emptyset}\>\Psi_{\emptyset}\| \\
    &\le
    \sqrt{\EE[\<g_{J|X_J},\Psi_{\emptyset}\>^2]\,
          \EE[\<f_{J|X_J},\Psi_{\emptyset}\>^2]}
          \\
  &\qquad+\sqrt{\EE\,[\|g_{J|X_J}\|^2]
            \,\EE[\|f_{J|X_J}-\<f_{J|X_J},\Psi_{\emptyset}\>\Psi_{\emptyset}\|^2]}
\label{eq:Cauchy-Schwarz on g}          
\end{align}
where the penultimate step follows from the Cauchy-Schwarz inequality applied to $\<\cdot,\cdot\>$, and the final step follows from the
Cauchy-Schwarz inequality applied to $\EE(\cdot)$.

For any $h:\cube\to\RR$, one has $\|h-\<h,\Psi_{\emptyset}\>\Psi_{\emptyset}\|^2 = \var_{\pi}(h)$,
and lemma~\ref{lem:expectation of random inner product} implies
\begin{equation}
\label{eq:expected length of random restriction}
\EE\|h_{J|X_J}\|^2=\|h\|^2\;.
\end{equation}
Applying these observations to equation~\eqref{eq:Cauchy-Schwarz on g} yields
\begin{equation}
\label{eq:g norm bound after variance inserted}
\|g\|^2\le \sqrt{\EE[\<g_{J|X_J},\Psi_{\emptyset}\>^2]\,
          \EE[\<f_{J|X_J},\Psi_{\emptyset}\>^2]} 
          +\|g\|\,\sqrt{\EE[\var_{\pi}(f_{J|X_J})]}\;.
\end{equation}

Now observe that for any $h:\cube\to\RR$ 
\begin{equation}
    \<h,\Psi_{\emptyset}\>^2 \le \|h\|^2 - \sum_{A\in\binom{\bits}{k}} \<h,\Psi_A\>^2 \le \|h\|^2\;.
    \label{eq:bound on zeroth mode}
\end{equation}
Applying equations~\eqref{eq:expected length of random restriction} and~\eqref{eq:bound on zeroth mode} with $h=f_{J|X_J}$, and recalling
$\|f\|^2=\var(f(X))$, then results in
\begin{equation}
\label{eq:g norm bound after extracting f norm}
\|g\|^2 \le
\sqrt{\var(f(X))\,\EE[\<g_{J|X_J},\Psi_{\emptyset}\>^2]}
          +\|g\|\,\sqrt{\EE[\var_{\pi}(f_{J|X_J})]}\;,
\end{equation}
and dividing by $\|g\|$ and squaring\footnote{Making use of $(a+b)^2\le 2(a^2+b^2)$} yields
\begin{equation}
\|g\|^2 \le 
2\var(f(X))\,\frac{\EE[\<g_{J|X_J},\Psi_{\emptyset}\>^2]}{\|g\|^2}
+2\,\EE[\var_{\pi}(f_{J|X_J})]\;.
\label{eq:penultimate bound on length of g}
\end{equation}

It remains only to bound $\EE[\<g_{J|X_J},\Psi_{\emptyset}\>^2]$. Applying equations~\eqref{eq:expected length of random restriction}
and~\eqref{eq:bound on zeroth mode} now to $h=g_{J|X_J}$ we obtain
\begin{align}                
\EE \<g_{J|X_J},\Psi_{\emptyset}\>^2
&\le\|g\|^2 - \sum_{A\in\binom{\bits}{k}} \EE \<g_{J|X_J},\Psi_A\>^2\\
&= \sum_{A\in\binom{\bits}{k}} \EE [\<g,\Psi_A\>^2 - \<g_{J|X_J},\Psi_A\>^2]\;.
\label{eq:bound for zeroth mode of randomised g}
\end{align}

One can easily verify that for any $A,I\subseteq \bits$ and $y\in\cube$
we have
\begin{equation}           
\label{eq:restriction of Psi}
(\Psi_A)_{I|y_I} = \Psi_{A\cap I}(y)\Psi_{A\setminus I}\;,
\end{equation}
and it follows, for any $h:\cube\to\RR$, that
\begin{equation}
\label{eq:spectral expansion for restrictions}
h_{I|y_I}= \sum_{C\subseteq\bits} \<h,\Psi_C\>\Psi_{C\cap I}(y)\Psi_{C\setminus I}\;.
\end{equation}
Consequently, for any $A\subseteq\bits$
\begin{equation}
\label{eq:spectral coefficient for restricted g}
\<g_{I|y_I},\Psi_A\>=\sum_{C\in\binom{\bits}{k}}\<g,\Psi_C\>\Psi_{C\cap I}(y)\<\Psi_{C\setminus I},\Psi_A\>\;.
\end{equation}
In particular, if $|A|=k$ we obtain
\begin{equation}
\label{eq:size k spectral coefficient for restricted g}
\<g_{I|y_I},\Psi_A\>= \<g,\Psi_A\>\indicator(A\cap I=\emptyset)\;,
\end{equation}
and therefore
\begin{align}
    \EE[\<g,\Psi_A\>^2 -\<g_{J|X_J},\Psi_A\>^2] &= \<g,\Psi_A\>^2\,\PP(A\cap J\neq\emptyset)\\
    &\le \<g,\Psi_A\>^2\,\sum_{i\in A}\PP(J\ni i)\\
    &\le \<g,\Psi_A\>^2\,k\,\delta\;.
\label{eq:revealment bound on random g coefficients}    
\end{align}
Combining equations~\eqref{eq:revealment bound on random g coefficients} and~\eqref{eq:bound for zeroth mode of randomised g} yields
\begin{equation}
\label{eq:revealment bound on zeroth mode of g}
\EE \<g_{J|X_J},\Psi_{\emptyset}\>^2 
\le k\,\delta\, \sum_{A\in\binom{\bits}{k}} \<g,\Psi_A\>^2 
= k\,\delta\,\|g\|^2\;,
\end{equation}
and equation~\eqref{eq:Schramm-Steif inequality} then follows upon combining equation~\eqref{eq:revealment bound on zeroth mode of g} with
equations~\eqref{eq:penultimate bound on length of g} and~\eqref{eq:length of g in terms of f}. If $f$ is non-constant, one can then divide
both sides of equation~\eqref{eq:Schramm-Steif inequality} by $\var(f(X))$, and specialising to the case $\pi=\pi_{p,\bits}$ for some
$p\in(0,1)$ then yields the statement of theorem~\ref{thm:revealment-predictability theorem}.
\end{proof}

\begin{remark}
  A comparison of equation~\eqref{eq:Schramm-Steif inequality} with~\cite[equation (2.5)]{SchrammSteif2010} shows that where
the latter contains a factor $\|f\|^2$, the former contains the slightly improved factor $\var(f(X))$. This minor difference is a
simple consequence of the argument applied immediately below~\eqref{eq:Schramm-Steif inequality}.
\end{remark}

It remains now only to prove lemma~\ref{lem:expectation of random inner product}.
\begin{proof}[Proof of lemma~\ref{lem:expectation of random inner product}]
Let $t\in \queries$ and $I\subseteq\bits$. From the definition of query paths, if for some $y\in S^I$ and $z\in S^{\bar{I}}$ we have
$J(t,y\oplus z)=I$, then in fact $J(t,y\oplus z')=I$ for all $z'\in S^{\bar{I}}$. It follows that if
\begin{equation}
\Sigma_t^I:=\{y\in S^I \,:\, \exists x\in \cube \text{ with } x_I=y, J(t,x)=I\}\;,
\end{equation}
then
\begin{align}
    \EE\,[\<g_{J|X_J},h_{J|X_J}\>]
    &=
    \sum_{t\in \queries} \sum_{I\subseteq\bits}\sum_{y\in\Sigma_t^I}
    \sum_{z\in S^{\bar{I}}}\PP(T=t,X_I=y,X_{\bar{I}}=z)\,\<g_{I|y},h_{I|y}\>\\
    &=
    \sum_{t\in \queries} \sum_{I\subseteq\bits}\sum_{y\in\Sigma_t^I}
    \PP(T=t,X_I=y)\,\<g_{I|y},h_{I|y}\>\\
    &=
    \sum_{t\in \queries} \sum_{I\subseteq\bits}\sum_{y\in\Sigma_t^I}
    \PP(T=t,X_I=y)\,\sum_{x'\in\cube}\pi(x')g_{I|y}(x')h_{I|y}(x')\\
    &=
    \sum_{t\in \queries} \sum_{I\subseteq\bits}\sum_{y\in\Sigma_t^I}
    \PP(T=t,X_I=y)\,\nonumber
    \\
    &\qquad\times
    \sum_{y'\in S^I}
    \sum_{z'\in S^{\bar{I}}}\PP(X_I=y',X_{\bar{I}}=z')
    g_{I|y}(y'\oplus z')h_{I|y}(y'\oplus z')\\
    &=
    \sum_{t\in \queries} \sum_{I\subseteq\bits}\sum_{y\in\Sigma_t^I}
    \PP(T=t,X_I=y)\,\nonumber
    \\
    &\qquad\times
    \sum_{y'\in S^I}
    \sum_{z'\in S^{\bar{I}}}\PP(X_I=y',X_{\bar{I}}=z')
    g(y\oplus z')h(y\oplus z')\\
    &=
    \sum_{t\in \queries} \sum_{I\subseteq\bits}\sum_{y\in\Sigma_t^I}
    \sum_{z\in S^{\bar{I}}}
    \PP(T=t,X_I=y)\,
    \PP(X_{\bar{I}}=z)
    g(y\oplus z)h(y\oplus z)\\
    &=
    \sum_{t\in \queries}\sum_{x\in \cube}\PP(T=t,X=x)\,g(x)h(x)\\
    &=
    \sum_{x\in \cube}\pi(x)\,g(x)h(x)\\
    &=\<g,h\>\;.
\end{align}
\end{proof}

\begin{corollary}
  \label{cor:predictability is bounded above by 1}
  Let $p\in(0,1)$ and let $\bits$ be a finite set. Let $\pi:=\pi_{p,\bits}$, and let $\mu$ be any distribution on $\queries$. For any
  non-constant function $f:\cube\to\RR$ we have $\epsilon_{\mu,\pi,f}\le1$.
  \begin{proof}
    Suppose $\EE(f)=0$, so that $\<f,f\>= \var(f(X))$. Since
    \begin{equation}
      \var_{\pi}(f_{J|X_J}) = \sum_{\substack{A\subseteq[n]\\A\neq\emptyset}}\,\<f_{J|X_J},\Psi_A\>^2 \le \<f_{J|X_J},f_{J|X_J}\>\;,
    \end{equation}
    it follows from lemma~\ref{lem:expectation of random inner product} that 
    \begin{align}
      \EE[\var_{\pi}(f_{J|X_J})] \le \var(f(X))\;.
      \label{eq:predictability bound}
    \end{align}
    Since both sides of equation~\eqref{eq:predictability bound} are unchanged upon adding a constant to $f$, the assumption of mean zero
    can be relaxed, and the stated result follows.
  \end{proof}
\end{corollary}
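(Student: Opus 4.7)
The plan is to show directly that $\EE[\var_\pi(f_{J|X_J})] \le \var(f(X))$, from which the stated bound on the ratio $\epsilon_{\mu,\pi,f}$ follows immediately.

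First I would note that both the numerator and denominator of $\epsilon_{\mu,\pi,f}$ are invariant under adding a constant to $f$: shifting $f \mapsto f + c$ leaves $\var(f(X))$ unchanged, and for each realisation of $(T,X)$ shifts $f_{J|X_J}$ by the same constant $c$, which does not change its variance under $\pi$. So it is harmless to assume $\EE(f(X)) = 0$, which makes $\var(f(X)) = \langle f, f \rangle_{p,B}$.

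Next, for each fixed realisation of $(T,X)$, $f_{J|X_J}$ is an element of $l^2(\pi)$, so it admits an orthonormal expansion in the basis $\{\Psi_A^{p,B}\}_{A \subseteq B}$, and the $\Psi_\emptyset^{p,B}$ coefficient equals $\EE_\pi(f_{J|X_J})$. Consequently
\begin{equation}
\var_\pi(f_{J|X_J}) = \sum_{\emptyset \ne A \subseteq B} \langle f_{J|X_J}, \Psi_A^{p,B} \rangle_{p,B}^{\,2} \;\le\; \langle f_{J|X_J}, f_{J|X_J} \rangle_{p,B},
\end{equation}
by Parseval's identity applied inside the $l^2(\pi)$ structure. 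This is the only inequality used in the proof, and it is the step to highlight — everything else is bookkeeping.

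Taking expectation over $(T,X)$ and applying lemma~\ref{lem:expectation of random inner product} with $g = h = f$ then gives
\begin{equation}
\EE[\var_\pi(f_{J|X_J})] \;\le\; \EE\,\langle f_{J|X_J}, f_{J|X_J} \rangle_{p,B} \;=\; \langle f, f \rangle_{p,B} \;=\; \var(f(X)).
\end{equation}
Dividing by $\var(f(X)) > 0$ (using that $f$ is non-constant) yields $\epsilon_{\mu,\pi,f} \le 1$. There is no genuine obstacle here; the result is essentially Parseval combined with the previously established identity $\EE\,\langle g_{J|X_J}, h_{J|X_J} \rangle = \langle g, h \rangle$, and the main point to get right is simply the mean-zero reduction so that $\var(f(X))$ — rather than the potentially larger $\|f\|^2$ — appears on the right-hand side.
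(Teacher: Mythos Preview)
Your proof is correct and follows essentially the same approach as the paper: reduce to the mean-zero case, use Parseval to bound $\var_\pi(f_{J|X_J})$ by $\langle f_{J|X_J}, f_{J|X_J}\rangle$, and then apply lemma~\ref{lem:expectation of random inner product} with $g=h=f$. The only cosmetic difference is that you perform the mean-zero reduction at the outset whereas the paper states it at the end, but the argument is otherwise identical.
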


\ack{The authors thank Andrea Collevecchio and Greg Markowsky for many useful conversations during the early stages of this work, and also
  Bob Griffiths for useful comments on an earlier draft. This research was supported by the Australian Research Council Centre of Excellence
  for Mathematical and Statistical Frontiers (Project no. CE140100049), and the Australian Research Council's Discovery Projects funding
  scheme (Project No.s DP140100559, DP180100613 and DP230102209).}

\section*{References}
\bibliographystyle{unsrt}

\end{document}